\newtheorem{thm}{Theorem}[section]
\newtheorem{cor}[thm]{Corollary}
\newtheorem{prop}[thm]{Proposition}
\newtheorem{knowncor}{Corollary}[section]
\newtheorem{knownthm}[knowncor]{Theorem}
\newtheorem{knownlem}[knowncor]{Lemma}
\theoremstyle{definition}
\newtheorem{definition}[knowncor]{Definition}
\newtheorem{rem}[thm]{Remark}
\newcommand{\CC}{\widehat{\mathbb{C}}}%riemann surface
\newcommand{\C}{\mathbb{C}}%complex plane
\newcommand{\D}{\mathbb{D}}%unit disk
\newcommand{\R}{\mathbb{R}}%real number
\newcommand{\A}{\mathcal{A}}%analytic functions
\renewcommand{\S}{\mathcal{S}}%univalent functions
\newcommand{\CTC}{\mathcal{C}}%clsoe-to-convex functions
\renewcommand{\R}{\mathcal{R}}%Noshiro-Warschawski functions
\newcommand{\K}{\mathcal{K}}%convex functions
\newcommand{\LU}{\mathcal{LU}}%locally univalent functions
\newcommand{\ZF}{\mathcal{ZF}}%nonzero holomorphic functions
\newcommand{\no}{\noindent}
\newcommand{\dstyle}{\displaystyle}
\renewcommand{\Re}{\textup{Re}\,}
\renewcommand{\Im}{\textup{Im}\,}
\newcommand{\closure}{\overline}
\newcommand{\vareps}{\varepsilon}
\newcommand{\round}{\partial}
\title[quasiconformal extendibility of integral transforms]{Quasiconformal extendibility of integral transforms of Noshiro-Warschawski functions}
\author[I. Hotta]{Ikkei Hotta}
\author[L.-M. Wang]{Li-Mei Wang}
\subjclass[2010]{44A15, 30C45, 30C62}
\keywords{univalent function, integral transform, the Noshiro-Warschawski Theorem, quasiconformal extension, pre-Schwarzian derivative, differential subordination}
\thanks{The first author was supported by Grant-in-Aid for JSPS Fellows (13J02250) and Grant-in-Aid for Young Scientists (B) (26800053)}
\thanks{The second author was supported by National Natural Science Foundation of China (No. 61374088)}
\address{Department of Applied Science, Faculty of Engineering, Yamaguchi University, 2-16-1 Tokiwadai, Ube 755-8611, JAPAN}
\email{ikkeihotta@gmail.com}
\address{School of Statistics, University of International Business and Economics, No. 10, Huixin Dongjie, Chaoyang District, Beijing, 100029 China}
\email{wangmabel@163.com}
\begin{document}

	%	++++++++++++++++++++++++++++++++++++++++++++++++++++++
	%
	%		Abstract
	%
	%	++++++++++++++++++++++++++++++++++++++++++++++++++++++

\begin{abstract}
 
Since the nonlinear integral transforms $J_{\alpha}[f](z) = \int_{0}^{z}(f'(u))^{\alpha} du$ and $I_{\alpha}[f](z) =\int_0^z (f(u)/u)^{\alpha} du$ with a complex number $\alpha$ have been introduced, a great number of studies were dedicated to deriving sufficient conditions for univalence on the unit disk.
%On the other hand, little is studied about their quasiconformal extendibility to the complex plane. %, namely, the condition that $J_{\alpha}[f](\D)$ and $I_{\alpha}[f](\D)$ are quasidisks.
On the other hand, little is known about the conditions that $J_{\alpha}[f]$ or $I_{\alpha}[f]$ produces a holomorphic univalent function in the unit disk which extends to a quasiconformal map on the complex plane.
In this paper we discuss quasiconformal extendibility of the integral transforms $J_{\alpha}[f]$ and $I_{\alpha}[f]$ for holomorphic functions which satisfy the Noshiro-Warschawski criterion.
Various approaches using pre-Schwarzian derivatives, differential subordinations and Loewner theory are taken to this problem.
 
\end{abstract}

\maketitle

	%	++++++++++++++++++++++++++++++++++++++++++++++++++++++
	%
	%		Section 1
			\section{Introduction}

	%	++++++++++++++++++++++++++++++++++++++++++++++++++++++

	%	+++++++++++++++++++++++++++++++++++++++++++
	%
	%		1-1
			\subsection{Integral Transforms}
			\label{1.1}
	%
	%	+++++++++++++++++++++++++++++++++++++++++++

Let $\A$ be the family of analytic functions defined in $\D := \{z \in \C : |z| < 1\}$ with $f(0) = 0$ and $f'(0) = 1$.
Let $\LU$ and $\ZF$ be the subclasses of $\A$ defined by $\LU := \{ f \in \A : f'(z) \neq 0,\,\,\forall z \in \D\}$ and $\ZF := \{f \in \A : f(z)/z \neq 0,\,\, \forall z \in \D\}$.

In 1915, Alexander \cite{Alexander:1915} first observed the integral transform defined by 
$
J[f](z) = \int_{0}^{z}f(u)/u\,du
$
on the class $\ZF$ maps the class of starlike functions onto the class of convex functions.
Thus one might expect that $J[f]$ always produces a univalent function for all $f \in \S$, where $\S$ is the subclass of $\A$ consisting of univalent functions on $\D$. 
However in 1963, Krzy{\.z} and Lewandowski \cite{KrzyzLewandowski:1963} gave the counterexample $f(z) = z/(1-iz)^{1-i}$ which is $\pi/4$-spirallike but transformed to a non-univalent function.
In 1972, Kim and Merkes \cite{KimMerkes:1972} extended this type of transform by introducing a complex parameter $\alpha \in \C$ as
\begin{equation*}
J_{\alpha}[f](z) := \int_{0}^{z}\left(\frac{f(u)}{u}\right)^{\alpha} du
\end{equation*}
for $f \in \ZF$, where the branch is chosen so that $(f(z)/z)^{\alpha} =1$ for $z=0$.
In their investigation it was shown $J_{\alpha}[\S] \subset \S$ when $|\alpha| \leq 1/4$ while $J_{\alpha}[\S] \not\subset \S$ if $|\alpha| > 1/2$ and $\alpha \neq 1$ (consider $J_{\alpha}[K](z)$ and Royster's example \cite{Royster:1965}, where $K(z) := z/(1-z)^{2}$ is the Koebe function).

Another object of investigation in the studies of integral transforms is $I_{\alpha}[f]$, defined by
\begin{equation}\label{Ia[f]}
I_{\alpha}[f](z) := \int_0^z (f'(u))^{\alpha}du.
\end{equation}
on $\LU$, where the branch of $(f')^{\alpha} = \exp (\alpha \log f')$ is chosen  so that $(f')^{\alpha}(0) = 1$.
Then $J_{\alpha}[f]$ is represented by $J_{\alpha}[f] = I_{\alpha}[J[f]]$.
In 1975, Pfaltzgraff \cite{Pfaltzgraff:1975} proved that $I_{\alpha}[\S] \subset \S$ if $|\alpha| \leq 1/4$.
On the other hand, Royster's example again shows that there exists a function $f \in \S$ such that $I_{\alpha}[f] \not\in \S$ if $|\alpha| > 1/3$ or $\alpha \neq 1$.

Up to now, no better estimates of the range of $|\alpha|$ have been obtained in the problems of univalence of $I_{\alpha}[f]$ and $J_{\alpha}[f]$.
The reader may be referred to \cite{Duren:1983} for basic terminology in the theory of univalent functions and \cite[Chapter 15]{Goodman:1983b} for the basic information about integral transforms on $\S$.

	%	+++++++++++++++++++++++++++++++++++++++++++
	%
	%		1-2
			\subsection{The Noshiro-Warschawski criterion}
	%
	%	+++++++++++++++++++++++++++++++++++++++++++

It is known that for a function $f \in \A$, the condition that $f'(\D)$ lies in the right half-plane ensures univalence of $f$ on $\D$.
This is referred to as the \textit{Noshiro-Warschawski criterion} due to Noshiro \cite{Noshiro:1934} and Warschawski \cite{Warschawski:1935} independently.
The original form of the theorem is the following (see also {\cite[Theorem 8]{Avkhadiev:1975}}).
\begin{knownthm}[The Noshiro-Warschawski criterion]
\label{NWthm}
A non constant function $f$ that is analytic in a convex domain $D$ is univalent in $D$ if
\begin{equation}
\label{NWcondition}
\Re\{e^{-ic} f'(z)\} \geq 0
\end{equation}
for all $z \in D$, where $c$ is a fixed real number.
\end{knownthm}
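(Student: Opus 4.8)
The plan is to exploit the convexity of $D$ to reduce injectivity to a statement about an integral of $f'$ along straight segments, and then to turn the hypothesis $\Re\{e^{-ic}f'(z)\} \geq 0$ into a \emph{strict} inequality using the open mapping theorem.

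First I would fix two distinct points $z_1, z_2 \in D$. Since $D$ is convex, the segment $z(t) = (1-t)z_1 + tz_2$, $t \in [0,1]$, lies entirely in $D$, and the fundamental theorem of calculus gives
\begin{equation*}
f(z_2) - f(z_1) = (z_2 - z_1)\int_0^1 f'(z(t))\,dt.
\end{equation*}
Writing $g := e^{-ic}f'$, multiplying by $e^{-ic}/(z_2-z_1)$ and taking real parts yields
\begin{equation*}
\Re\left\{e^{-ic}\,\frac{f(z_2)-f(z_1)}{z_2-z_1}\right\} = \int_0^1 \Re\{g(z(t))\}\,dt.
\end{equation*}
If I can show the integrand is strictly positive, the right-hand side is positive, so the left-hand side is nonzero, forcing $f(z_2) \neq f(z_1)$ and hence injectivity.

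The key step — and the only place the non-constant hypothesis enters — is upgrading $\Re g \geq 0$ to $\Re g > 0$ throughout $D$. Here I would split into two cases. If $g$ is constant, say $g \equiv a$ with $\Re a \geq 0$, then $f'$ is the constant $e^{ic}a$; this constant cannot vanish (else $f$ would be constant), so $f$ is affine and trivially univalent. If $g$ is non-constant, the open mapping theorem guarantees that $g(D)$ is open. Since $g(D) \subset \{w : \Re w \geq 0\}$ and no point of the imaginary axis can be interior to a set contained in the closed right half-plane, $g(D)$ must avoid the line $\Re w = 0$ entirely; thus $\Re g > 0$ on all of $D$. The integral computation above then applies and completes the proof. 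The main obstacle is precisely this boundary-touching issue: with only the weak inequality, the real part of the difference quotient could a priori be zero, and it is the open mapping theorem (equivalently, the strong minimum principle for the harmonic function $\Re g$) that rules this out.
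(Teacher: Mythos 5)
Your proof is correct: the segment-integration argument over the convex domain, combined with the case split on whether $e^{-ic}f'$ is constant and the open mapping theorem (equivalently the strong minimum principle for the harmonic function $\Re\{e^{-ic}f'\}$) to upgrade the weak inequality to a strict one, is exactly the classical proof of this criterion. The paper itself states the theorem as known, citing Noshiro and Warschawski without reproducing a proof, and your argument matches the standard one from those sources, with the boundary-touching subtlety and the affine case both handled properly.
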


\no
As special cases, Alexander \cite{Alexander:1915} showed the case when $D$ is the unit disk and $f'(\D)$ is contained in a half-plane bounded by a straight line through the origin, and Wolff \cite{Wolff:1934} showed when $D$ is the right half-plane.
On the other hand, Tims \cite{Tims:1951} and Herzog and Piranian \cite{HerzogPiranian:1951} showed that convexity of $D$ is essential in the theorem, that is, \eqref{NWcondition} implies univalence of $f$ on $D$ if and only if $D$ is convex. 

%One example is given by $f(z) = z^{1+ \frac{\pi}{2\beta}}$ with a constant $\beta$ satisfying $\pi/2 < \beta < \pi$.
%$f$ is defined on the non-convex domain $\{z : |\arg z| < \beta\}$ and $\Re f' >0$ there, but is not univalent.

In what follows we will treat the family of functions $f \in \A$ satisfying the hypothesis of the theorem in which $D$ is the unit disk $\D$ and $c=0$.
It is denoted by $\R$, i.e.,
$$
\R := \{ f \in \A : \textup{ $\Re f'(z) >0$ for all $z \in \D$}\}.
$$
Then Theorem \ref{NWthm} states that $\R \subset \S$.
Compared with the other typical subclasses of $\S$, a geometric characterization of $\R$ is not known. 
Several geometric properties of $f(\D)$ by means of Loewner chains are observed in \cite{Hotta:oldandnew}. 
%In \cite{Hotta:oldandnew},  
Roughly speaking, $f(\D)$ is the complement of the union of the rays $\{f(e^{i\theta}) + t e^{i\theta} : t \in [0,\infty)\}$, where $f(e^{i\theta})$ is understood as the impression of the prime end at $e^{i\theta} \in \round \D$.
%\footnote{To be accurate, $f(e^{i\theta})$ is understood as the impression of the prime end at $e^{i\theta} \in \round \D$. 
%See  \cite{Hotta:oldandnew} for details.}.

A more general problem is posed of finding a domain $R \subset \C$ such that for a given simply-connected domain $D \subset \C$ the condition 
$$
f'(D) \subset R
$$
implies univalence of $f$ on $\D$.
It is studied as a first-order criterion.
%The general case of that $D$ is a simply-connected domain in $\C$ is studied as a first-order univalence criterion, a condition of the form 
%$$
%f'(D) \subset R
%$$
%which implies that $f$ is one-to-one in $D$, where $R$ is a domain in $\C$.
One can express it more generally
$$
\log f'(D) \subset R^{*}
$$
which means that $f'(z) = \exp g(z)$ where $g(D) \in R^{*}$.
It is particularly concerned with the special case in which $R^{*} = \alpha I$, where $\alpha \in \C$ and $I$ is an infinite strip parallel to the real axis with width $\pi$, i.e., $I := \{z :  a-(\pi/2) < \Im z < a+(\pi/2)\}\,(a \in \mathbb{R})$.
Theorem \ref{NWthm} gives a criterion of the case when $\alpha = 1$ and $a=c$.
For further information about first-order univalence criteria, see e.g. \cite{Gevirtz:1987, Gevirtz:1994} and more recent work \cite{ABGevirtz:2008}.

	%	+++++++++++++++++++++++++++++++++++++++++++
	%
	%		1-3
			\subsection{The aim of the paper}
	%
	%	+++++++++++++++++++++++++++++++++++++++++++

Until now, a great number of studies were dedicated to deriving sufficient conditions for the univalence of $J_{\alpha}[f]$ and $I_{\alpha}[f]$ on $\D$.
On the other hand, little seems to be known about the conditions that $J_{\alpha}[f]$ or $I_{\alpha}[f]$ produces a univalent function in $\D$ which extends to a quasiconformal map on $\C$, except that a straightforward application of the $\lambda$-lemma ($i(\lambda, z) := J_{\lambda/4}[f](z)$ forms a holomorphic motion on $(\lambda, z) \in \D \times \D$).

In this paper we discuss quasiconformal extendibility of the integral transforms $J_{\alpha}[f]$ and $I_{\alpha}[f]$ for holomorphic functions which satisfy the Noshiro-Warschawski criterion.
Various approaches using pre-Schwarzian and Schwarzian derivatives, differential subordinations and Loewner theory are taken to this problem.
%In the last section we also discuss explicit quasiconformal extensions with the criterion due to Betker  

In the last section our research contributes to constructing explicit quasiconformal extensions which are constructed by ``inverse'' counterparts of Loewner chains introduced by Betker \cite{Betker:1992}.

%In this paper univalence and quasiconformal extensibility of $J_{\alpha}[f]$ and $I_{\alpha}[f]$ on $\R$ are investigated using various approaches, pre-Schwarzian derivatives, subordination properties, holomorphic motions and Loewner theory.
%In Section 2, we collect some preliminary results on Schwarzian and pre-Schwarzian derivatives and differential subordinations of analytic functions.
%Those properties will be fully used in the later section.
%In Section 3, a sharp estimate of the norm of the pre-Schwarzian derivatives for $J_{\alpha}[f]$.
%We make use of it to 
%Section 5 and 6 will be devoted to quasiconformal extensions of the operator $J_{\alpha}[f]$.
%We will try to approach this problem from two sides. 
%One is applying the theory of holomorphic motions and the celebrated $\lambda$-lemma.
%It enables us to derive quasiconformality of $J_{\alpha}[f]$ on the full class of $\S$.
%The other is by Loewner theory and a result by Betker.
%We also consider explicit quasiconformal extension for Betker's theorem.

	%	++++++++++++++++++++++++++++++++++++++++++++++++++++++
	%
	%		Section 2
			\section{Preliminaries}
	%
	%	++++++++++++++++++++++++++++++++++++++++++++++++++++++

	%	+++++++++++++++++++++++++++++++++++++++++++
	%
	%		2-1
			\subsection{Schwarzian and pre-Schwarzian derivatives}
	%
	%	+++++++++++++++++++++++++++++++++++++++++++

As important quantities to investigate properties of functions $f$ in $\LU$, we introduce $T_f$ and $S_f$ defined by
\begin{equation*}
T_f := \frac{f''}{f'},
\hspace{20pt}
S_f := \left(\frac{f''}{f'}\right)' -\frac12\left(\frac{f''}{f'}\right)^2.
\end{equation*}
$T_f$ and $S_f$ are called the \textit{pre-Schwarzian derivative} and the \textit{Schwarzian derivative} respectively. 
These are considered as elements of the Banach space of functions $f \in \LU$, for which the norm
\begin{equation*}
\begin{array}{lll}
\dstyle ||T_f|| := \sup_{z \in \D} (1-|z|^2) |T_f|, \\[5pt]
\dstyle ||S_f|| := \sup_{z \in \D} (1-|z|^2)^{2} |S_f|,
\end{array}
\end{equation*}
is finite. 
Further, in connection with the theory of univalent functions, the following estimates are known.
Here, a homeomorphism $f$ on a domain $G$ is said to be \textit{k-quasiconformal} ($0 \leq k < 1$) if $\round_{\bar{z}}f$ and $\round_{z}f$, the partial derivatives of $f$ in $z$ and $\bar{z}$ in the distributional sense, are locally integrable on $G$ and satisfies $|\round_{\bar{z}}f| \leq k |\round_{z}f|$ almost everywhere in $G$. 
If for a given $f \in \S$ there exists a $k$-quasiconformal $F$ of $\C$ such that its restriction on $\D$ is equivalent to $f$, then $f$ is said to have a \textit{$k$-quasiconformal extension} to $\C$.
\begin{knownthm}
\label{normT}
\def\labelenumi{({\roman{enumi}}).}
Let $f \in \LU$.
\begin{enumerate}
\item If $||T_f|| \leq 1$, then $f$ is univalent in $\D$,
\item if $||T_f|| \leq k < 1$, then $f$ has a quasiconformal extension to $\C$,
\item if $f \in \S$, then $||T_f|| \leq 6$,
\item if $||S_f|| \leq 2$, then $f$ is univalent in $\D$,
\item if $f \in \S$, then $||S_f|| \leq 6$.
\end{enumerate}
\end{knownthm}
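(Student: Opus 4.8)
The plan is to prove the five assertions independently, grouping them by the technique each one needs: the univalence-and-extension statements (i)--(ii) through an explicit Loewner chain, the sharp pointwise bounds (iii) and (v) through the conformally invariant form of the elementary coefficient inequalities for $\S$, and Nehari's criterion (iv) through the linear differential equation attached to $S_f$.

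For (i) and (ii) I would embed $f\in\LU$ into Becker's Loewner chain
\[
f_t(z) = f(e^{-t}z) + (e^{t}-e^{-t})\,z\,f'(e^{-t}z), \qquad z\in\D,\ t\ge 0,
\]
which satisfies $f_t(z)=e^{t}z+\cdots$. A direct computation shows that it obeys the Loewner--Kufarev equation $\partial_t f_t(z)=z\,\partial_z f_t(z)\,p(z,t)$ with
\[
p(z,t)=\frac{1-A(z,t)}{1+A(z,t)}, \qquad A(z,t)=(1-e^{-2t})\,e^{-t}z\,T_f(e^{-t}z).
\]
Writing $w=e^{-t}z$ and using $1-|w|^2\ge 1-e^{-2t}$ together with $|w|\le 1$, the hypothesis $\|T_f\|\le 1$ gives $|A(z,t)|\le|w|\le 1$, hence $\Re p\ge 0$; so $\{f_t\}$ is a Loewner chain and its initial element $f=f_0$ is univalent, which is (i). Under the stronger hypothesis $\|T_f\|\le k<1$ the same estimate yields $\left|\dfrac{p(z,t)-1}{p(z,t)+1}\right|=|A(z,t)|\le k$ uniformly in $t$, and Becker's extension construction (gluing the $f_t$ into a self-map of $\C$ via $F(re^{i\theta})=f_{\log r}(e^{i\theta})$ for $r\ge 1$) then produces a $k$-quasiconformal map whose restriction to $\D$ agrees with $f$, which is (ii).

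The sharp bounds (iii) and (v) I would reduce to their values at a single point by conformal invariance. Fixing $\zeta\in\D$ and forming the Koebe transform
\[
g(z)=\frac{f\!\left(\dfrac{z+\zeta}{1+\bar\zeta z}\right)-f(\zeta)}{(1-|\zeta|^2)\,f'(\zeta)}=z+c_2z^2+c_3z^3+\cdots\in\S,
\]
one reads off $2c_2=(1-|\zeta|^2)\,T_f(\zeta)-2\bar\zeta$, so the coefficient bound $|c_2|\le 2$ gives $\bigl|(1-|\zeta|^2)\,T_f(\zeta)-2\bar\zeta\bigr|\le 4$ and hence $(1-|\zeta|^2)\,|T_f(\zeta)|\le 4+2|\zeta|\le 6$, proving (iii). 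For (v) I would compute $S_f(0)=6(a_3-a_2^2)$ for $f(z)=z+a_2z^2+a_3z^3+\cdots$ and invoke the area-theorem inequality $|a_3-a_2^2|\le 1$ to get $|S_f(0)|\le 6$; the general point follows from the covariance $S_{f\circ\phi}=(S_f\circ\phi)(\phi')^2+S_\phi$ for a disk automorphism $\phi$ with $\phi(0)=\zeta$, since $S_\phi\equiv 0$ and $|\phi'(0)|=1-|\zeta|^2$ give $(1-|\zeta|^2)^2\,|S_f(\zeta)|\le 6$.

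Finally, for Nehari's criterion (iv) I would represent $f=w_1/w_2$ as a ratio of two linearly independent solutions of $w''+\tfrac12 S_f\,w=0$, so that univalence of $f$ becomes the statement that no nontrivial solution vanishes twice in $\D$. Under the hypothesis $|S_f(z)|\le 2/(1-|z|^2)^2$ I would compare, along each chord of $\D$, with the extremal real equation $v''+(1-x^2)^{-2}v=0$ and use a Sturm-type oscillation argument to exclude a second zero. I expect the two genuinely delicate points to be the verification in (ii) that Becker's boundary map has dilatation bounded by $k$ rather than merely being a homeomorphism, and the oscillation comparison in (iv), where the choice of extremal equation and the ``at most one zero per chord'' dichotomy must be controlled carefully; by contrast (iii) and (v) are essentially algebraic consequences of the quadratic coefficient bounds.
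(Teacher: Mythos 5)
The paper never proves this theorem: it is stated as known, with (i)--(ii) attributed to Becker, (iii) to the classical inequality $|(1-|z|^2)f''(z)/f'(z)-2\bar z|\le 4$, (iv) to Kraus--Nehari, and (v) to Nehari. Your reconstructions follow exactly those sources. Your chain $f_t(z)=f(e^{-t}z)+(e^t-e^{-t})\,zf'(e^{-t}z)$ is Becker's, and the computation is right: with $w=e^{-t}z$ one gets $p=(1-A)/(1+A)$, $A=(1-e^{-2t})\,w\,T_f(w)$, and since $1-e^{-2t}\le 1-|w|^2$ the hypothesis gives $|A|\le |w|\,\|T_f\|$, hence $\Re p>0$ for (i) and the exact Becker dilatation identity $\bigl|(p-1)/(p+1)\bigr|=|A|\le k$ for (ii). Your Koebe-transform identity $2c_2=(1-|\zeta|^2)T_f(\zeta)-2\bar\zeta$ with $|c_2|\le 2$ is precisely the inequality the paper quotes for (iii), and for (v) the computation $S_f(0)=6(a_3-a_2^2)$, the area-theorem bound $|a_3-a_2^2|\le 1$ (from $1/f(1/z)\in\Sigma$ -- worth one line in a full write-up), and the covariance $S_{f\circ\phi}=(S_f\circ\phi)(\phi')^2$ with $|\phi'(0)|=1-|\zeta|^2$ constitute Nehari's own argument.

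The one step that would fail as literally written is the Sturm comparison in (iv) ``along each chord.'' On a chord at distance $d>0$ from the origin, parametrized by arclength $s$, one has $|z|^2=d^2+s^2$, so $1-|z|^2=1-d^2-s^2\le 1-s^2$ and the hypothesis only yields $\tfrac12|S_f|\le (1-d^2-s^2)^{-2}\ge (1-s^2)^{-2}$ -- the inequality points the wrong way for comparison with $v''+(1-x^2)^{-2}v=0$, so the argument works only on a diameter. The repair is the same M\"obius covariance you already used in (v): for $\sigma\in\textup{Aut}(\D)$ one has $S_{f\circ\sigma}=(S_f\circ\sigma)(\sigma')^2$ and $|\sigma'(z)|=(1-|\sigma(z)|^2)/(1-|z|^2)$, whence $\|S_{f\circ\sigma}\|=\|S_f\|$; so first move the two zeros of the solution $w$ of $w''+\tfrac12 S_f\,w=0$ to symmetric points $\pm r$ of the real diameter. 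A second point you flagged but must actually handle: $w$ is complex-valued on $(-1,1)$, so Sturm theory does not apply directly; pass to $v=|w|$, which where positive satisfies $v''\ge -\tfrac12|S_f|\,v\ge -(1-x^2)^{-2}v$, and then the monotonicity of the Wronskian-type expression $v'u-vu'$ against the zero-free comparison solution $u(x)=\sqrt{1-x^2}$ rules out the second zero. With these two standard insertions your sketch of (iv) closes, and the remaining four parts are complete and coincide with the proofs the paper cites.
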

Becker showed (i) and (ii) in \cite{Becker:1972, Becker:1973}. 
The sharpness of the constant 1 in (i) is due to Becker and Pommerenke \cite{BeckerPom:1984}.
(iii) is an easy consequence of the well-known inequality $|(1-|z|^2)f''(z)/f'(z) -2\bar{z}| \leq 4$ for $f \in \S$. 
(iv) was first shown by Kraus \cite{Kraus:1932} and subsequently rediscovered by Nehari \cite{Nehari:1949}. 
Hille \cite{Hille:1949} showed that the constant 2 is the best possible one with the function $f(z)=((1+z)/(1-z))^{i \vareps}$ ($\vareps >0$), for it is not univalent for all $\vareps >0$ but $||S_f|| = 2(1+\vareps^2)$ can approach 2.
Nehari \cite{Nehari:1949} also verified the assertion (v) and the sharpness of which follows from $||S_K|| =6$ for the Koebe function $K$.

	%	+++++++++++++++++++++++++++++++++++++++++++
	%
	%		2-2
			\subsection{Subordination properties}
	%
	%	+++++++++++++++++++++++++++++++++++++++++++

For analytic functions $f$ and $g$,   it is said that \textit{$f$ is weakly subordinate to $g$} if there exists an analytic function $\omega$ which maps $\D$ into $\D$ such that $f(z) = (g \circ \omega)(z)$.
Further, if $w$ can be taken so as to fulfill $\omega(0)=0$, then $f$ is said to be \textit{subordinate} to $g$ whose relation is denoted by $f(z) \prec g(z)$.  
Below we will state two subordination properties which will play central roles in Section 3.
The first is a result on differential subordinations due to Hallenbeck and Ruscheweyh.

\begin{knownthm}[Hallenbeck and Ruscheweyh \cite{HallenRusch:1975}]\label{HR}%%%%%%
Let $p(z)$ be analytic in $\D$ with $p(0)=1$.
Let $q(z)$ be convex univalent in $\D$ with $q(0) = 1$ and suppose $p(z) \prec q(z)$.
Then for all $\gamma \neq 0$ with $\Re \gamma > 0$, we have
\begin{equation*}\label{HallenRusch}%%%%%%
\gamma z^{-\gamma} \int_{0}^{z} u^{\gamma-1}p(u) du 
\hspace{5pt}\prec
\hspace{5pt}
\gamma z^{-\gamma} \int_{0}^{z} u^{\gamma-1}q(u) du.
\end{equation*}
\end{knownthm}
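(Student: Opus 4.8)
The plan is to read the desired conclusion as a first-order linear differential subordination and to establish it by the boundary-point method of Jack, Miller and Mocanu. First I would set
\begin{equation*}
P(z) := \gamma z^{-\gamma}\int_0^z u^{\gamma-1}p(u)\,du, \qquad Q(z) := \gamma z^{-\gamma}\int_0^z u^{\gamma-1}q(u)\,du,
\end{equation*}
multiply each by $z^{\gamma}$, differentiate, and divide out $z^{\gamma-1}$ to obtain the linear relations
\begin{equation*}
p(z) = P(z) + \frac{zP'(z)}{\gamma}, \qquad q(z) = Q(z) + \frac{zQ'(z)}{\gamma}.
\end{equation*}
Both $P$ and $Q$ are analytic in $\D$ with $P(0)=Q(0)=1$, the hypothesis becomes $P+zP'/\gamma \prec q$, and the target is $P \prec Q$. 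Thus everything reduces to a first-order subordination driven by the convex dominant $q$.

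The essential ingredient I would isolate as a lemma is that $Q$ is \emph{convex univalent} and satisfies $Q \prec q$; equivalently, the averaging operator $q \mapsto Q$ preserves convexity and keeps the image inside $q(\D)$. For real $\gamma>0$ this is transparent: after the substitution $u=tz$ one has $Q(z)=\gamma\int_0^1 t^{\gamma-1}q(tz)\,dt$, a genuine probability average (of total mass $\int_0^1\gamma t^{\gamma-1}\,dt=1$) of the points $q(tz)$, each lying in the convex set $q(\D)$, so that $Q(\D)\subseteq q(\D)$ and convexity is inherited. For complex $\gamma$ the measure $\gamma t^{\gamma-1}\,dt$ is no longer nonnegative, the convex-combination argument breaks down, and establishing convexity of $Q$ becomes genuinely delicate. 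This is the step I expect to be the main obstacle, and it is precisely where the hypothesis $\Re\gamma>0$ is consumed. I would handle it through the convexity-preservation property of the integral operator, reformulating the claim $\Re(1+zQ''/Q')>0$ as a differential subordination for the quantity $1+zQ''/Q'$ and closing it by the same admissibility technique used below.

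Granting the lemma, I would argue by contradiction. Suppose $P \not\prec Q$. Since $Q$ is univalent with $Q(0)=P(0)$, the Jack--Miller--Mocanu boundary-point lemma furnishes a point $z_0\in\D$, a boundary point $\zeta_0\in\partial\D$, and a real number $m\geq 1$ such that $P(z_0)=Q(\zeta_0)$ and $z_0P'(z_0)=m\,\zeta_0 Q'(\zeta_0)$. Substituting into the linear relation for $p$ and using $\zeta_0 Q'(\zeta_0)/\gamma = q(\zeta_0)-Q(\zeta_0)$ gives
\begin{equation*}
p(z_0) = Q(\zeta_0) + \frac{m\,\zeta_0 Q'(\zeta_0)}{\gamma} = m\,q(\zeta_0) + (1-m)\,Q(\zeta_0).
\end{equation*}
Now $q(\zeta_0)$ is a boundary value of the convex univalent $q$, hence lies on $\partial q(\D)$, while $Q(\zeta_0)\in\overline{q(\D)}$ by $Q\prec q$. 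Taking a supporting line of the convex domain $q(\D)$ at $q(\zeta_0)$ and testing the displayed identity against its outward normal shows, because $m\geq 1$, that $p(z_0)$ lies outside $q(\D)$. This contradicts $p\prec q$, which forces $p(z_0)\in q(\D)$; hence $P\prec Q$, as claimed. The only place requiring care beyond the convexity lemma is checking the hypotheses of the boundary-point lemma and the supporting-line estimate in the borderline case $m=1$, where $p(z_0)=q(\zeta_0)\in\partial q(\D)$ still fails to lie in $q(\D)$ and so yields the contradiction.
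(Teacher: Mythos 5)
The paper itself gives no proof of this statement---it is quoted as a known theorem with only the citation \cite{HallenRusch:1975}---so your proposal has to be measured against the original argument. Your superstructure (the reduction $p = P + zP'/\gamma$, the Jack--Miller--Mocanu boundary-point lemma, the supporting-line estimate, including the borderline case $m=1$) is coherent and is essentially Miller and Mocanu's later proof of this very theorem. But there is a genuine gap at precisely the point you flag and then defer: the lemma that $Q$ is univalent (you claim convex) with $Q \prec q$. The boundary-point lemma cannot even be invoked without $Q$ being injective on $\overline{\D}$ minus an exceptional set---the point $\zeta_0$ and the constant $m \geq 1$ come from comparing $P$ against a \emph{univalent} dominant. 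And your justification in the real case is fallacious: a probability average of points in the convex set $q(\D)$ does give $Q(\D) \subseteq q(\D)$, hence $Q \prec q$ for real $\gamma > 0$, but it does not make $Q$ a convex (or even univalent) \emph{mapping}. Averages of convex maps need not be convex: $\frac{1}{2}\left[\frac{z}{1-z} + \frac{z}{1+z}\right] = \frac{z}{1-z^{2}}$ maps $\D$ onto the plane slit along two rays of the imaginary axis (starlike, not convex), and MacGregor showed such combinations can even fail to be univalent. For complex $\gamma$ you have neither $Q \prec q$ nor univalence, and the suggestion to close $\Re\left(1 + zQ''/Q'\right) > 0$ "by the same admissibility technique" has nothing to run on, since no univalent dominant for that quantity is supplied; the deferred lemma is the actual content of the theorem in disguise. (The remaining issue---legitimizing the boundary identity $q(\zeta_0) = Q(\zeta_0) + \zeta_0 Q'(\zeta_0)/\gamma$ by working with $q(\rho z)$ and letting $\rho \to 1$---is standard and you acknowledge it.)

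For comparison, the original proof makes the missing lemma do all the work and dispenses with the boundary-point argument entirely: writing the operator as a Hadamard convolution, $Q = \phi_\gamma * q$ and $P = \phi_\gamma * p$ with kernel $\phi_\gamma(z) = \sum_{n=0}^{\infty} \frac{\gamma}{\gamma+n}\, z^{n}$, one proves that $\phi_\gamma$ is convex for $\Re \gamma > 0$ (a substantive fact), and then the Ruscheweyh--Sheil-Small convolution theorem (the resolved P\'olya--Schoenberg conjecture, that convolution with a convex function preserves convexity and preserves subordination to a convex dominant) yields in one stroke both the convexity of $Q$ and the conclusion $P = \phi_\gamma * p \prec \phi_\gamma * q = Q$. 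So a complete write-up along your lines must either import that convolution machinery to certify your lemma---at which point the Miller--Mocanu contradiction becomes redundant---or supply an independent proof of the univalence of $Q$ and of $Q \prec q$ for complex $\gamma$, which your sketch does not contain.
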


\no
For example, if $f$ satisfies $\Re f'(z)(z/f(z))^{1-\gamma} >0$, then $f'(z)(z/f(z))^{1-\gamma} \prec (1+z)/(1-z)$ and Theorem \ref{HR} shows
\begin{equation*}
\left(\frac{f(z)}{z}\right)^{\gamma} \prec 1 + \frac{2\gamma}{z^{\gamma}}\int_0^z \frac{u^{\gamma}}{1-u} du.
\end{equation*}
In particular, putting $\gamma =1$ we have
\begin{equation}\label{subordinate}
\frac{f(z)}{z} \prec \frac{-z-2\log(1-z)}{z}
\end{equation}
for all $f \in \R$.
This gives the best dominant  for $\R$ because if $\phi(z) := -z-2\log(1-z)$ then $\phi'(z) = (1+z)/(1-z)$ and therefore $\phi \in \R$.

The second is a fundamental subordination principle in Geometric Function Theory. 
The original idea is due to Littlewood.
\begin{knownthm}[Kim and Sugawa {\cite[p.195]{KimSugawa:2002}}]\label{KS}%%%%%%
Let $g$ be locally univalent in $\D$. 
For an analytic function $f$ in $\D$, if $f'$ is weakly subordinate to $g'$, then we have $||T_{f}|| \leq ||T_{g}||$.
In particular, $f$ is uniformly locally univalent on $\D$.
\end{knownthm}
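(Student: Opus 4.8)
The plan is to reduce everything to the chain rule for the pre-Schwarzian derivative together with the Schwarz--Pick lemma. Since $f'$ is weakly subordinate to $g'$, by definition there is an analytic self-map $\omega$ of $\D$ with $f' = g' \circ \omega$. Because $g$ is locally univalent, $g'$ does not vanish, hence neither does $f'$, so $T_f = f''/f'$ is well defined and I may differentiate the relation $\log f'(z) = \log g'(\omega(z))$ to obtain
\begin{equation*}
T_f(z) = \frac{f''(z)}{f'(z)} = \frac{g''(\omega(z))}{g'(\omega(z))}\,\omega'(z) = T_g(\omega(z))\,\omega'(z).
\end{equation*}

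Next I would estimate the weighted quantity defining the norm. Fix $z \in \D$ and write $w = \omega(z) \in \D$. Multiplying the identity above by $1 - |z|^2$ and then multiplying and dividing by the factor $1 - |w|^2$ gives
\begin{equation*}
(1-|z|^2)\,|T_f(z)| = \bigl(1 - |w|^2\bigr)\,|T_g(w)| \cdot \frac{(1-|z|^2)\,|\omega'(z)|}{1 - |\omega(z)|^2}.
\end{equation*}
The first factor is at most $||T_g||$ by the very definition of the pre-Schwarzian norm, and the crucial point is that the second factor is at most $1$. This is precisely the Schwarz--Pick lemma applied to the holomorphic self-map $\omega$ of $\D$, namely $|\omega'(z)|/(1 - |\omega(z)|^2) \leq 1/(1-|z|^2)$. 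Taking the supremum over $z \in \D$ then yields $||T_f|| \leq ||T_g||$.

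I would emphasize that the one subtle point, which is really the heart of the matter, is that weak subordination only provides a self-map $\omega$ of $\D$ with no normalization at the origin, so the ordinary Schwarz lemma is not available; it is essential to invoke the invariant (Schwarz--Pick) form, which holds for every holomorphic self-map of $\D$. No genuine obstacle remains beyond this observation, as the rest is a direct computation via logarithmic differentiation.

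Finally, for the concluding assertion, note that (under the hypothesis $||T_g|| < \infty$ implicit in the stated inequality) we have shown $||T_f|| < \infty$. Since a locally univalent analytic function on $\D$ has finite pre-Schwarzian norm if and only if it is uniformly locally univalent, the function $f$ inherits uniform local univalence on $\D$, which is the desired conclusion.
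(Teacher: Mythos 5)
Your proof is correct and takes essentially the same route as the paper: the paper states this theorem without proof (citing Kim--Sugawa), but its own proof of the neighboring Proposition \ref{schwarz} rests on exactly the two ingredients you use, namely the identity $T_f = (T_g \circ \omega)\,\omega'$ obtained by logarithmic differentiation of $f' = g' \circ \omega$, and the Schwarz--Pick inequality \eqref{schwarzpick} for an unnormalized self-map $\omega$ of $\D$. Your closing observations --- that weak subordination rules out the plain Schwarz lemma, and that the final assertion uses the equivalence between finiteness of $||T_f||$ and uniform local univalence (so it implicitly requires $||T_g|| < \infty$) --- are both accurate.
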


\no
Theorem \ref{KS} has a wide range of applications so that we might hope that one can also obtain the inequality $||S_{f}|| \leq ||S_{g}||$ for functions $f$ and $g$ such that $f$ is weakly subordinate to $g$.
However, it is show that the inequality does not always hold under this assumption.
Here we note that the Schwarz-Pick lemma shows that all analytic self-mappings $\omega$ of the unit disk satisfy
\begin{equation}\label{schwarzpick}
\frac{|\omega'(z)|}{1-|\omega(z)|^2} \leq \frac{1}{1-|z|^2}
\end{equation}
for all $z \in \D$.
\begin{prop}\label{schwarz}%%%%%%
Let $g$ be locally univalent in $\D$. 
For an analytic function $f$ in $\D$, if $f'$ is weakly subordinate to $g'$, then we have
\begin{equation}\label{schwarznorm}
||S_{f}|| \leq ||S_{g}|| + ||T_{\omega}|| \cdot ||T_{g}||,
\end{equation}
where $\omega$ is an analytic function which appears in the definition of subordination.  
In particular $f$ is uniformly locally univalent on $\D$.
\end{prop}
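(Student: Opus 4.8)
The plan is to reduce \eqref{schwarznorm} to Schwarz--Pick estimates, exactly in the spirit of the proof of Theorem \ref{KS}, once the correct composition identity for $S_f$ is in hand. Writing the weak subordination as $f'(z) = g'(\omega(z))$ with $\omega$ an analytic self-map of $\D$, I would first differentiate to record $T_f = (T_g\circ\omega)\,\omega'$; this is precisely the identity underlying Theorem \ref{KS}. If $\omega'$ vanishes somewhere in $\D$ then $||T_\omega|| = \infty$ and the asserted bound is trivial, so I may assume $\omega$ is locally univalent throughout.

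Differentiating once more and forming $S_f = T_f' - \frac12 T_f^2$ is the conceptual heart of the argument. Because $f' = g'\circ\omega$ rather than $f = g\circ\omega$, the outcome is not the classical Schwarzian cocycle; a short computation instead gives
$$S_f(z) = S_g(\omega(z))\,\omega'(z)^2 + T_g(\omega(z))\,\omega''(z),$$
so that the cross term carries $T_g$ and $\omega''$ in place of $S_\omega$. Verifying this identity carefully, and resisting the temptation to substitute the ordinary chain rule for the Schwarzian, is the one genuinely delicate point.

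The estimation then proceeds termwise against the weight $(1-|z|^2)^2$. For the first term, \eqref{schwarzpick} gives $(1-|z|^2)|\omega'| \le 1-|\omega|^2$, whence $(1-|z|^2)^2 |S_g(\omega)|\,|\omega'|^2 \le |S_g(\omega)|\,(1-|\omega|^2)^2 \le ||S_g||$. For the cross term I would write $|\omega''| = |T_\omega|\,|\omega'|$ and split the weight as $(1-|z|^2)\cdot(1-|z|^2)$, grouping the factors as
$$(1-|z|^2)^2 |T_g(\omega)|\,|\omega''| = \bigl[(1-|z|^2)|T_\omega|\bigr]\cdot\bigl[|T_g(\omega)|\,(1-|z|^2)|\omega'|\bigr].$$
The first bracket is at most $||T_\omega||$, and Schwarz--Pick bounds the second by $|T_g(\omega)|\,(1-|\omega|^2) \le ||T_g||$. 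Summing the two contributions and taking the supremum over $z \in \D$ yields \eqref{schwarznorm}.

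Finally, the uniform local univalence of $f$ is immediate: under the same hypotheses Theorem \ref{KS} already gives $||T_f|| \le ||T_g|| < \infty$, which is exactly the condition guaranteeing that $f$ is uniformly locally univalent. I expect no real obstacle beyond establishing the composition formula for $S_f$ and arranging the factorization of the cross term so that one factor produces $||T_\omega||$ while the other combines with $|T_g(\omega)|$ via Schwarz--Pick to produce $||T_g||$.
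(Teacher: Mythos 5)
Your proposal is correct and takes essentially the same route as the paper: you derive the identity $S_f = (S_g\circ\omega)\,\omega'^2 + (T_g\circ\omega)\,\omega''$ from $T_f = (T_g\circ\omega)\,\omega'$ and then estimate the two terms against the weight $(1-|z|^2)^2$ using the Schwarz--Pick inequality \eqref{schwarzpick}, which is exactly the paper's computation. Your additional remarks (dismissing the case where $\omega'$ vanishes, since then $||T_\omega||=\infty$, and invoking Theorem \ref{KS} for the uniform local univalence) are harmless refinements of the same argument.
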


\begin{proof}
By assumption we have $T_{f} = T_{g} \circ \omega \cdot \omega'$ and hence \eqref{schwarzpick} implies that
\begin{eqnarray*}
\lefteqn{(1-|z|^{2})^{2} \left|\left(\frac{f''}{f'}\right)' -\frac12\left(\frac{f''}{f'}\right)^{2}\right|}\\
&\hspace{30pt}=&(1-|z|^{2})^{2} \left|\left(\frac{g''}{g'}\right)' \omega'^{2} +\frac{g''}{g'}\cdot \omega'' 
-\frac12 \left(\frac{g''}{g'} \cdot \omega'\right)^{2}\right|\\
&\hspace{30pt}\leq&
\frac{(1-|\omega|^{2})^{2}}{|\omega'|^{2}} \left|\left(\frac{g''}{g'}\right)' \omega'^{2} -\frac12 \left(\frac{g''}{g'} \cdot \omega'\right)^{2}\right| + (1-|z|^{2})\frac{1-|\omega|^{2}}{|\omega'|}\left|\frac{g''}{g'}\cdot \omega'' \right|\\
&\hspace{30pt}\leq&
||S_{g}|| + ||T_{\omega}|| \cdot ||T_{g}||.
\end{eqnarray*}
\end{proof}
The term $||T_{\omega}|| \cdot ||T_{g}||$ in \eqref{schwarznorm} is eliminated in only a few cases.
$||T_g|| = 0$ if and only if $g$ is an affine transform and then $||S_g||$ also vanishes.
Therefore $||S_f|| =0$, which implies that $f$ is a M\"obius transformation.
$||T_{\omega}|| = 0$ if and only if $\omega$ is an affine transform which is equivalent to the case that one can write $f(z) = a g(z) + b$, where $a, b \in \C$ are complex constants.

	%	++++++++++++++++++++++++++++++++++++++++++++++++++++++
	%
	%		Section 3
			\section{Pre-Schwarzian derivatives and differential subordinations for $J_{\alpha}[f]$}
	%
	%	++++++++++++++++++++++++++++++++++++++++++++++++++++++

	%	+++++++++++++++++++++++++++++++++++++++++++
	%
	%		3-1
			\subsection{Evaluation of $||T_{J_{\alpha}[f]}||$ on $\R$}
	%
	%	+++++++++++++++++++++++++++++++++++++++++++

Firstly we give a sharp estimation of the norm of $T_{J_{\alpha}[f]}$ for a function $f \in \R$ and make use of Theorem \ref{normT} to obtain the range of $|\alpha|$ which ensures univalence and quasiconformal extensibility of $J_{\alpha}[f]$.

\begin{thm}
Let $f \in \R$. Then we have the sharp estimate
\begin{equation*}
||T_{J_{\alpha}[f]}|| \leq |\alpha| \cdot h(r_{0})
\end{equation*}
where $h(r_0) \approx 1.055681$. 
Here $h$ is the function defined by
\begin{equation}\label{h(z)}
h(r) := \frac{-(1+r)^{2}}{r + 2 \log(1-r)} -\frac{1-r^{2}}{r}
\end{equation}
and $r_{0} \approx 0.329423$ is the unique root of the equation
\begin{equation}\label{2}
2(r^{2}+1)(r-1)[\log(1-r)]^{2} - 2r(r-1)^{2}\log(1-r) + r^{3}(r +3) =0.
\end{equation} 
in $r \in (0,1)$.
\end{thm}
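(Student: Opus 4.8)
The plan is to reduce everything to the pre-Schwarzian norm of the Alexander transform of the extremal function $\phi(z) = -z - 2\log(1-z)$ appearing in \eqref{subordinate}, and then to a one-variable maximization.

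First I would compute the pre-Schwarzian derivative explicitly. Since $J_{\alpha}[f]'(z) = (f(z)/z)^{\alpha}$, taking logarithmic derivatives gives $T_{J_{\alpha}[f]} = \alpha\,(\log(f/z))' = \alpha\,(f'/f - 1/z)$. Writing $J[f](z) = \int_0^z f(u)/u\,du$ for Alexander's transform, so that $(J[f])' = f/z$ and $T_{J[f]} = f'/f - 1/z$, this reads $T_{J_{\alpha}[f]} = \alpha\,T_{J[f]}$, whence $||T_{J_{\alpha}[f]}|| = |\alpha|\cdot||T_{J[f]}||$. Thus it suffices to show that $\sup_{f \in \R}||T_{J[f]}|| = h(r_0)$. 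Note that for $f \in \R$ one has $\Re(f(z)/z) = \int_0^1 \Re f'(tz)\,dt > 0$, so $f/z$ never vanishes and $T_{J[f]}$ is well defined.

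Next I would pass to the extremal function by subordination. For $f \in \R$ the subordination \eqref{subordinate} gives $f(z)/z \prec \phi(z)/z$. Since $(J[f])' = f/z$ is weakly subordinate to $(J[\phi])' = \phi/z$, and $J[\phi]$ is locally univalent (as $\Re(\phi/z) > 0$), Theorem \ref{KS} yields $||T_{J[f]}|| \leq ||T_{J[\phi]}||$. Because $\phi \in \R$ itself (indeed $\phi'(z) = (1+z)/(1-z)$ has positive real part), equality holds for $f = \phi$, so $\sup_{f \in \R}||T_{J[f]}|| = ||T_{J[\phi]}||$. Equivalently, one may write $f/z = (\phi/z)\circ\omega$ with $\omega$ a self-map of $\D$, differentiate to get $T_{J[f]} = (T_{J[\phi]}\circ\omega)\,\omega'$, and deduce the bound from the Schwarz--Pick inequality \eqref{schwarzpick}.

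It remains to evaluate $||T_{J[\phi]}|| = \sup_{z\in\D}(1-|z|^2)\,|T_{J[\phi]}(z)|$, where $T_{J[\phi]}(z) = \phi'(z)/\phi(z) - 1/z$ with $\phi'(z) = (1+z)/(1-z)$. A direct computation shows that for real $z = r \in (0,1)$ the quantity $(1-r^2)\,T_{J[\phi]}(r)$ equals exactly $h(r)$ in \eqref{h(z)}, and that $h(0^+) = 1$, $h(1^-) = 0$, with $h > 0$ on $(0,1)$. The crux is to show that the supremum over the disk is attained on the positive real axis, i.e. that for each fixed $r$ the function $\theta \mapsto (1-r^2)\,|T_{J[\phi]}(re^{i\theta})|$ is maximized at $\theta = 0$; the expansion $T_{J[\phi]}(z) = 1 + z/3 + \cdots$ makes this plausible near the origin, but a global argument requires careful estimates of $|\phi'/\phi - 1/z|$ along circles. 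Granting the real-axis reduction, the problem becomes $\sup_{r\in(0,1)} h(r)$; setting $h'(r) = 0$ and clearing denominators produces the transcendental equation \eqref{2}, and I would finish by proving that it has a unique root $r_0 \approx 0.329423$ in $(0,1)$ (via a sign and monotonicity analysis of the cleared polynomial-logarithmic expression) at which $h$ attains its global maximum $h(r_0) \approx 1.055681$. The two places where I expect genuine work, rather than the soft subordination argument, are precisely this reduction to the real axis and the uniqueness of the root of \eqref{2}.
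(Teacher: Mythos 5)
Your route coincides with the paper's in every step you actually carry out: the identity $\|T_{J_{\alpha}[f]}\| = |\alpha|\cdot\|T_{J[f]}\|$, the passage to the dominant via \eqref{subordinate} (i.e. $(J[f])' = f/z \prec \phi/z = (J[\phi])'$) combined with Theorem \ref{KS}, sharpness from $\phi \in \R$, and the one-variable maximization of $h$ with critical equation \eqref{2}. But the step you explicitly defer --- showing that $\sup_{z\in\D}\frac{1-|z|^2}{|z|}\,|g(z)|$, with $g(z) = 1 + \frac{1+z}{1-z}\cdot\frac{z}{z+2\log(1-z)}$, is attained on the positive real axis --- is precisely the crux, and ``granting the real-axis reduction'' leaves a genuine gap: without it you have no proof that the supremum equals $h(r_0)$ rather than something strictly larger, and the ``careful estimates along circles'' you gesture at are not routine, since $z+2\log(1-z)$ has a logarithmic singularity at $z=1$ and naive triangle-inequality bounds on $|z|=r$ destroy the constant.

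The paper closes this gap with a short algebraic device rather than hard estimates. Expanding $\log(1-z) = -\sum_{n\ge1} z^n/n$, one rewrites $g(z) = 1 - \frac{1+z}{\,1-2\sum_{n\ge 1}\frac{z^{n+1}}{(n+1)(n+2)}\,}$; the denominator is $1-F(z)$ with $F$ having nonnegative coefficients, so $\frac{1+z}{1-F(z)} = (1+z)\sum_{k\ge0}F(z)^k$ has nonnegative Taylor coefficients, whence $g$ has constant term $0$ and all remaining coefficients nonpositive. This gives $|g(z)| \le -g(|z|)$ for all $z\in\D$, and since the weight $\frac{1-|z|^2}{|z|}$ is radial, the supremum over the disk reduces at once to $\sup_{r\in(0,1)} h(r)$, which is exactly \eqref{h(z)}. (Your second concern, uniqueness of the root of \eqref{2}, is treated in the paper only at the level you propose --- a calculus observation that $h'$ has a single critical point in $(0,1)$ plus numerics --- so that part of your plan is on par with the published argument.) Add the negative-coefficient expansion and your proof is complete, and then it is essentially identical to the paper's.
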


\begin{proof}
Taking a logarithmic differentiation we have 
\begin{equation*}
||T_{J_{\alpha}[f]}|| = |\alpha| \,||T_{J[f]}||.
\end{equation*}
Then it suffices to estimate $||T_{J[f]}||$.
Let us suppose that $f \in \R$.
By \eqref{subordinate} and Theorem \ref{KS} we have
\begin{equation*}
||T_{J[f]}|| \leq \sup_{z \in \D} (1-|z|^2)\left|\frac{\phi'(z)}{\phi(z)} - \frac1{z}\right|
\end{equation*}
for all $f \in \R$, where $\phi(z) = -z-2\log(1-z)$ as defined in Section 2.2.
Then a computation shows that
\begin{eqnarray*}
\sup_{z \in \D} (1-|z|^2)\left|\frac{\phi'(z)}{\phi(z)} - \frac1{z}\right|
&=&
\sup_{z \in \D} (1-|z|^2)
\left|
\frac
{2(z + (1-z)\log(1-z))}
{(1-z)z(z+2\log(1-z))}
\right|\\
&=&
\sup_{z \in \D} \frac{1-|z|^2}{|z|}
\left|
1 + \frac{1+z}{1-z} \cdot\frac{z}{z + 2\log(1-z)}
\right|.
\end{eqnarray*}

\no
Let $\dstyle g(z) := 1 + \frac{1+z}{1-z} \cdot\frac{z}{z +  2\log(1-z)}$.
It is obvious that $g$ is symmetric with respect to the real axis. 
Next, we will show that all the coefficients of $g$ are negative.
$g$ is written as
\begin{eqnarray*}
g(z)
&=&
1 + \frac{1+z}{1-z} \cdot\frac{z}{z +  2\log(1-z)}\\
&=&
1 - \frac{1+z}{1-z}\cdot\frac{1}{1+2\sum_{n=1}^{\infty}\frac{z^n}{n+1}}\\
&=&
1 - \frac{1+z}{1-z+2\sum_{n=1}^{\infty}\frac{z^{n+1}}{n+2} - 2\sum_{n=1}^{\infty}\frac{z^{n+1}}{n+1} }\\
&=&
1 - \frac{1+z}{1-2\sum_{n=1}^{\infty}\frac{z^{n+1}}{(n+1)(n+2)} }.
\end{eqnarray*}
Thus $g$ has negative coefficients. This fact implies that
$
\sup_{z \in \D}|g(z)| = -\sup_{r \in (0,1)}g(r).
$
Therefore,
\begin{eqnarray*}
||T_{J[f]}|| 
&\leq&
\sup_{z \in \D} \frac{1-|z|^2}{|z|}
\left|
1 + \frac{1+z}{1-z} \cdot\frac{z}{z + 2\log(1-z)}
\right|\\[5pt]
&=&
\sup_{r \in (0,1)} h(r). 
\end{eqnarray*}
Simple calculation shows that $h'(r)$ has only one critical point $r_0$ in $r \in (0,1)$ which is the root of the equation \eqref{2}.
By numerical experiments, we have $r_0 \approx 0.329423$ and $h(r_0) \approx 1.055681$.
\end{proof}
%In consequence, the following is obtained.

Applying Theorem \ref{normT} to the above estimate, we can deduce the range of $|\alpha|$ of which $J_{\alpha}[f]$ is univalent in $\D$ and has a quasiconformal extension to $\C$.

\begin{cor}
Let $f \in \R$ and $k \in [0,1)$. Then,
\begin{enumerate}
\def\labelenumi{\textit{\arabic{enumi}}.}
\item If $|\alpha| \leq 1/h(r_0) \approx 0.947255$, then $J_{\alpha}[f] \in \S$,
\item If $|\alpha| < k/h(r_0)$, then $J_{\alpha}[f]$ can be extended to a $k$-quasiconformal mapping of $\C$.
\end{enumerate}
\end{cor}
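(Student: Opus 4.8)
The plan is to read this off directly from the preceding theorem together with Becker's criteria collected in Theorem~\ref{normT}, since all of the analytic work---in particular the sharp constant $h(r_0)$---has already been done. First I would check that the hypotheses of Theorem~\ref{normT} are actually met, i.e. that $J_{\alpha}[f] \in \LU$. Because $f \in \R \subset \S$ with $f(0)=0$, the function $f$ is zero-free on $\D \setminus \{0\}$ and $f(z)/z \to f'(0)=1$ as $z \to 0$, so $f \in \ZF$ and $J_{\alpha}[f]$ is well defined. Differentiating, $(J_{\alpha}[f])'(z) = (f(z)/z)^{\alpha} = \exp(\alpha \log(f(z)/z))$ never vanishes and equals $1$ at the origin, so indeed $J_{\alpha}[f] \in \LU$ (and in fact $J_{\alpha}[f] \in \A$).

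For the first assertion I would simply insert the bound $|\alpha| \le 1/h(r_0)$ into the estimate of the preceding theorem to obtain
\begin{equation*}
||T_{J_{\alpha}[f]}|| \leq |\alpha| \cdot h(r_0) \leq 1,
\end{equation*}
and then invoke Theorem~\ref{normT}(i) to conclude that $J_{\alpha}[f]$ is univalent on $\D$; combined with $J_{\alpha}[f] \in \A$ this gives $J_{\alpha}[f] \in \S$.

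For the second assertion the same substitution with the strict bound $|\alpha| < k/h(r_0)$ yields
\begin{equation*}
||T_{J_{\alpha}[f]}|| \leq |\alpha| \cdot h(r_0) < k < 1,
\end{equation*}
so in particular $||T_{J_{\alpha}[f]}|| \le k$, and Theorem~\ref{normT}(ii) then furnishes the desired $k$-quasiconformal extension of $J_{\alpha}[f]$ to $\C$.

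I expect no genuine obstacle here: the corollary is a direct application of the two criteria, and the numerical value $1/h(r_0) \approx 0.947255$ is merely the reciprocal of the sharp constant already computed. The only point deserving a line of care is the verification $J_{\alpha}[f] \in \LU$, which is precisely what licenses the use of Theorem~\ref{normT}; everything else is the mechanical substitution of the two bounds on $|\alpha|$ into the sharp norm estimate.
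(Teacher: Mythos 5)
Your proposal is correct and follows exactly the paper's (implicit) argument: the corollary is stated there as an immediate application of Theorem~\ref{normT}(i) and (ii) to the sharp estimate $\|T_{J_{\alpha}[f]}\| \leq |\alpha|\,h(r_0)$, precisely the substitution you perform. Your extra verification that $J_{\alpha}[f] \in \LU$ (via $f \in \R \subset \S$, hence $f \in \ZF$, and $(J_{\alpha}[f])' = (f(z)/z)^{\alpha} \neq 0$) is a sensible point of care that the paper leaves tacit.
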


	%	+++++++++++++++++++++++++++++++++++++++++++
	%
	%		3-2
			\subsection{Univalence of $J_{\alpha}[f]$ when {$\alpha \in \mathbb{R}$}}
	%
	%	+++++++++++++++++++++++++++++++++++++++++++

In the previous subsection we dealt with $J_{\alpha}[f]$ in the case that $\alpha$ is a complex number.
On the other hand, some geometric property of $J_{\alpha}[f]$ on typical subclasses of $\S$ under the restriction of $\alpha \in \mathbb{R}$ have been also investigated.
The following is a list of some fundamental results.
Here, we denote by $\K, \S^*, \CTC$ the well-known classes of convex, starlike and close-to-convex functions in $\A$, respectively.

\begin{knownthm}[Merkes and Wright \cite{MerkesWright:1971}]
Let $\alpha \in \mathbb{R}$. Then the following are true:
\begin{enumerate}
\item Let $f \in \K$. If $\alpha \in [-1, 3]$ then $J_{\alpha}[f] \in \CTC$; otherwise there exists a function $g \in \K$ such that $J_{\alpha}[g] \notin \S$.
\item Let $f \in \S^{*}$. If $\alpha \in [-\frac12, \frac32]$ then $J_{\alpha}[f] \in \CTC$; otherwise there exists a function $g \in \S^{*}$ such that $J_{\alpha}[g] \notin \S$.
\item Let $f \in \CTC$. If $\alpha \in [-\frac12, 1]$ then $J_{\alpha}[f] \in \CTC$; otherwise there exists a function $g \in \CTC$ such that $J_{\alpha}[g] \notin \CTC$.
\item Let $f \in \K$. If $\alpha \in [-\frac12, \frac32]$ then $I_{\alpha}[f] \in \CTC$; otherwise there exists a function $g \in \K$ such that $J_{\alpha}[g] \notin \S$.
\item Let $f \in \CTC$. If $\alpha \in [-\frac13, 1]$ then $I_{\alpha}[f] \in \CTC$; otherwise there exists a function $g \in \CTC$ such that $J_{\alpha}[g] \notin \S$.
\end{enumerate}
\end{knownthm}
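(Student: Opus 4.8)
The plan is to run all five assertions through one device, Kaplan's classical characterization of close-to-convexity: a locally univalent $F$ lies in $\CTC$ exactly when $\int_{\theta_1}^{\theta_2}\Re\left(1 + re^{i\theta}F''/F'\right)d\theta > -\pi$ for every $r<1$ and every $\theta_1<\theta_2$. A logarithmic differentiation gives the two elementary identities $1 + zF''/F' = (1-\alpha) + \alpha\,zf'/f$ when $F=J_\alpha[f]$, and $1 + zF''/F' = 1 + \alpha\,zf''/f'$ when $F=I_\alpha[f]$. Since $\Re(zf'/f)=\partial_\theta\arg f$ and $\Re(zf''/f')=\partial_\theta\arg f'$ on $z=re^{i\theta}$, writing $\Delta=\theta_2-\theta_1$ the Kaplan integral collapses to a boundary increment,
\[
\int_{\theta_1}^{\theta_2}\Re\Bigl(1+\tfrac{zF''}{F'}\Bigr)d\theta = (1-\alpha)\Delta + \alpha\,\bigl[\arg f\bigr]_{\theta_1}^{\theta_2}
\quad\text{for } J_\alpha[f],
\]
and to $\Delta + \alpha\,[\arg f']_{\theta_1}^{\theta_2}$ for $I_\alpha[f]$. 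Everything thus reduces to controlling, in the limit $r\to1$, the signed boundary measure carried by $d(\arg f)$ or $d(\arg f')$.

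Next I would insert the representation proper to each class. If $f\in\S^{*}$ then $zf'/f\in\P$, so by Herglotz $d(\arg f)$ is $2\pi$ times a probability measure and the $J_\alpha$-integral becomes $(1-\alpha)\Delta + 2\pi\alpha\,\mu(\mathrm{arc})$ with $\mu(\mathrm{arc})\in[0,1]$. This is linear in $(\Delta,\mu)$, hence minimized at the extreme data (point masses, i.e.\ rotated Koebe functions): $\mu(\mathrm{arc})=0,\ \Delta\to2\pi$ forces $\alpha\le3/2$ and $\mu(\mathrm{arc})=1,\ \Delta\to0$ forces $\alpha\ge-1/2$, which is (2). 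For $f\in\K$ the Marx--Strohh\"acker inequality $\Re(zf'/f)>1/2$ lets me write $zf'/f=\tfrac12+\tfrac12 q$ with $q\in\P$, so $d(\arg f)=\tfrac12\,d\theta+\pi\,d\nu$; the identical minimization, now with an extra uniform density $\tfrac12$, widens the window to $[-1,3]$, which is (1). Assertions (4) and (5) are the $I_\alpha$-analogues: for $f\in\K$ one has $1+zf''/f'\in\P$, reproducing the starlike computation verbatim and giving $[-1/2,3/2]$; for $f\in\CTC$, Kaplan applied to $f$ itself says precisely that $m(\mathrm{arc}):=\int_{\mathrm{arc}}\Re(1+zf''/f')\,d\theta$ obeys $-\pi<m(\mathrm{arc})<3\pi$ (the upper bound from the complementary arc together with $m$ over the full circle equal to $2\pi$), and substituting $[\arg f']=m(\mathrm{arc})-\Delta$ into $\Delta+\alpha[\arg f']$ yields $(1-\alpha)\Delta+\alpha\,m(\mathrm{arc})$, whence $[-1/3,1]$, the number $3\pi$ being exactly what produces the endpoint $-1/3$.

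For the sharpness (the ``otherwise'' clauses) I would exhibit the extremal maps directly: the half-plane map $\ell(z)=z/(1-z)$ for the convex cases and the Koebe function $K(z)=z/(1-z)^2$ — which is at once starlike and close-to-convex — for the remaining ones, supplemented by Royster's example at the $I_\alpha$ endpoints. For these the relevant measure degenerates to a single atom at $z=1$, of mass $\pi$ in $d(\arg\ell)$, of mass $2\pi$ in $d(\arg K)$, and of mass $3\pi$ in the tangent measure $1+zK''/K'$. Placing the test arc so as to straddle or to avoid this atom drives the Kaplan integral to exactly $-\pi$ at each stated endpoint and below $-\pi$ just beyond it; a short separate estimate near $z=1$ then upgrades ``fails Kaplan'' to the stronger ``fails to be univalent'' by locating a genuine overlap of the image of $J_\alpha[g]$ or $I_\alpha[g]$.

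The step I expect to resist this scheme is assertion (3), close-to-convexity of $J_\alpha[f]$. There the Kaplan integrand demands control of $d(\arg f)$, i.e.\ of $\Re(zf'/f)$, whereas close-to-convexity constrains only the tangent measure $d(\arg f')$, and there is no Carath\'eodory representation of $zf'/f$ to exploit. The crux is therefore to prove the sharp variation bounds for close-to-convex $f$ — that $\arg f$ can decrease by strictly less than $\pi$ across any boundary arc, yielding the endpoint $\alpha=1$, while its positive atoms are capped by $2\pi$, yielding $\alpha=-1/2$ — which I would extract from the geometric description of close-to-convex images as linearly accessible domains (complements of families of non-crossing rays), with the Koebe function once more furnishing the extremal configuration.
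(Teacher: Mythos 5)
This statement is quoted in the paper as a known theorem of Merkes and Wright with a bare citation; the paper contains no proof of it, so your attempt can only be measured against the classical argument of \cite{MerkesWright:1971} --- which, in outline, your scheme reproduces. The reduction via Kaplan's criterion and the identities $1+zF''/F'=(1-\alpha)+\alpha\,zf'/f$ for $F=J_\alpha[f]$ and $1+zF''/F'=1+\alpha\,zf''/f'$ for $F=I_\alpha[f]$ is exactly right, and your treatments of (1), (2), (4) and (5) are correct and essentially complete: the Herglotz representation for $\S^*$, Marx--Strohh\"acker for $\K$, and Kaplan's inequality on the arc plus the complementary arc for (5) (where the bound $m<3\pi$ on $m=\int_{\theta_1}^{\theta_2}\Re(1+zf''/f')\,d\theta$ is indeed what produces the endpoint $-1/3$) all check out, endpoints included, since $\theta_2-\theta_1<2\pi$ keeps the inequalities strict. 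Your identifications of the extremal boundary measures (atom of mass $\pi$ for $z/(1-z)$, $2\pi$ for $d(\arg K)$, and $3\pi$ at $z=1$ together with $-\pi$ at $z=-1$ for $1+zK''/K'$) are also accurate.

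The genuine gap is item (3), which you flag but do not close, and the substitute lemma you state is not obtainable by the bookkeeping you use elsewhere. Writing $m=\int_{\theta_1}^{\theta_2}\Re(zf'/f)\,d\theta$ and $\Delta=\theta_2-\theta_1$, you need $(1-\alpha)\Delta+\alpha m>-\pi$ for all arcs and all $\alpha\in[-1/2,1]$. The lower bound $m>-\pi$, which settles $0\le\alpha\le1$, is precisely the assertion that $f$ is close-to-star in Reade's sense, and the inclusion $\CTC\subset$ (close-to-star) is a bona fide theorem that must be proved or cited --- ``extract from linear accessibility'' is a program, not an argument. Worse, at $\alpha=-1/2$ you need $m<2\pi+3\Delta$, i.e.\ an essentially $2\pi$ cap on short-arc increments of $\arg f$; the complementary-arc trick that works in (5) yields only $m<2\pi+\pi=3\pi$, which would prove the window $[-1/3,1]$, not $[-1/2,1]$. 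So the ``positive atoms capped by $2\pi$'' claim is exactly the sharp lemma whose absence leaves (3) unproved, and it is the hardest point of the original paper. A secondary but real gap: violating Kaplan's inequality shows only $J_\alpha[g]\notin\CTC$, which suffices for (3) but not for the ``$\notin\S$'' clauses of (1), (2), (4), (5) (in (4) and (5) the intended function is of course $I_\alpha[g]$); your ``short separate estimate near $z=1$ locating a genuine overlap'' is the entirety of that step and is left unexecuted, whereas the classical treatment verifies directly that, e.g., the function with $F'(z)=(1-z)^{-\gamma}$ is non-univalent for real $\gamma\notin[-1,3]$ --- one fact which, applied with $\gamma=\alpha$, $2\alpha$, covers the half-plane and Koebe extremals simultaneously --- supplemented by Royster's example.
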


We will show the following;
\begin{thm}\label{44}
Let $\alpha \in \mathbb{R}$ and $f \in \R$. 
If $\alpha \in [-\alpha_{0}, \alpha_{0}]$ then $J_{\alpha}[f] \in \R$; otherwise there exists a function $g \in \R$ such that $J_{\alpha}[g] \notin \R$. Here $\alpha_0 \approx 1.723078$ is defined by $\alpha_0 := \pi/2q(e^{i\theta_0})$, where
$$
q(z) := \frac{-z-2\log(1-z)}{z}
$$ 
and $\theta_0$ is the unique root of the equation $\varsigma'(\theta) - \varsigma(\theta)^2 -1 =0$ in $\theta \in (0,\pi/2)$, where $\varsigma$ is defined by
\begin{equation}
\label{varsigma}
\varsigma(\theta) := 
\frac
{\dstyle \sin \theta  + \theta - \pi}
{\dstyle \cos \theta + 2 \log \left(2\sin\frac{\theta}{2}\right)}.
\end{equation}
\end{thm}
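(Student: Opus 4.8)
The plan is to translate membership $J_{\alpha}[f]\in\R$ into a bound on the argument of $f(z)/z$ and then to locate the extremal boundary point. Since $(J_{\alpha}[f])'(z)=(f(z)/z)^{\alpha}$ with the branch normalized by $(f(z)/z)^{\alpha}|_{z=0}=1$, we have $J_{\alpha}[f]\in\R$ precisely when $\Re (f(z)/z)^{\alpha}>0$ for every $z\in\D$. Writing $\log(f(z)/z)=\log|f(z)/z|+i\,\theta(z)$ for the continuous branch with $\theta(0)=0$, and using $\alpha\in\mathbb{R}$, one gets
$$\Re (f(z)/z)^{\alpha}=|f(z)/z|^{\alpha}\cos(\alpha\,\theta(z)).$$
Because $\theta$ is continuous on $\D$ with $\theta(0)=0$, its image is an interval containing $0$, so the requirement $\Re(f(z)/z)^{\alpha}>0$ throughout $\D$ holds whenever $|\alpha|\,\sup_{z\in\D}|\theta(z)|\le\pi/2$; the crucial quantity is thus $\sup_{z\in\D}|\arg(f(z)/z)|$.

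Next I would invoke the subordination \eqref{subordinate}, namely $f(z)/z\prec q(z)$ for every $f\in\R$. Writing $f(z)/z=q(\omega(z))$ for a Schwarz function $\omega$ gives $\theta(z)=\arg q(\omega(z))$, so that $\sup_{f\in\R}\sup_{z\in\D}|\arg(f(z)/z)|$ equals $\Lambda:=\sup_{z\in\D}|\arg q(z)|$, with the extremum realized by $f=\phi$ (for which $f(z)/z=q(z)$). Since $\phi'=(1+z)/(1-z)$ has positive real part, $\phi$ is univalent and $\phi(z)\neq0$ for $z\neq0$, so $q$ is zero-free on $\D$; hence $\log q$ admits a single-valued analytic branch and $\arg q=\Im\log q$ is harmonic on $\D$. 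As $q$ extends continuously to $\closure\D\setminus\{1\}$ and $\arg q(z)\to0$ as $z\to1$, the function $\arg q$ is continuous on $\closure\D$, and the maximum principle yields $\Lambda=\max_{\theta}|\arg q(e^{i\theta})|$.

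Then I would compute the boundary values. Using $1-e^{i\theta}=-2i\sin(\theta/2)e^{i\theta/2}$ one finds $\Re\phi(e^{i\theta})=-\cos\theta-2\log(2\sin(\theta/2))$ and $\Im\phi(e^{i\theta})=\pi-\theta-\sin\theta$, so that $\arg\phi(e^{i\theta})$ has tangent $\Im\phi/\Re\phi=\varsigma(\theta)$ and $\arg q(e^{i\theta})=\arg\phi(e^{i\theta})-\theta$. Differentiating through $(\arg\phi)'=\varsigma'/(1+\varsigma^{2})$, the interior critical points of $\theta\mapsto\arg q(e^{i\theta})$ solve exactly $\varsigma'(\theta)-\varsigma(\theta)^{2}-1=0$; the unique root $\theta_{0}\in(0,\pi/2)$ yields the maximum $\Lambda=\arg q(e^{i\theta_{0}})$. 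Consequently $\alpha_{0}=\pi/(2\Lambda)=\pi/(2\arg q(e^{i\theta_{0}}))$, and for $|\alpha|\le\alpha_{0}$ the bound $|\alpha\,\theta(z)|<|\alpha|\Lambda\le\pi/2$ holds on $\D$, giving $J_{\alpha}[f]\in\R$.

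For sharpness, take $g=\phi\in\R$. Since $q$ has real coefficients the range of $\arg q$ is the symmetric interval $(-\Lambda,\Lambda)$, and $\arg q(re^{i\theta_{0}})\to\Lambda$ as $r\to1$. If $|\alpha|>\alpha_{0}$ then $|\alpha|\Lambda>\pi/2$, so for $z$ near $e^{i\theta_{0}}$ the value $\alpha\arg q(z)$ crosses just beyond $\pi/2$ and $\cos(\alpha\arg q(z))<0$, i.e.\ $\Re(g(z)/z)^{\alpha}<0$, whence $J_{\alpha}[g]\notin\R$. The main obstacle is the third step: fixing the correct branch of $\arg\phi(e^{i\theta})$ across the sign change of $\Re\phi(e^{i\theta})$, confirming continuity of $\arg q$ up to $z=1$ so that the maximum principle is legitimate, and checking that $\varsigma'-\varsigma^{2}-1=0$ has a unique root in $(0,\pi/2)$ delivering a genuine maximum.
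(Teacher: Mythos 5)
Your proposal is correct and follows essentially the same route as the paper: the subordination $f(z)/z \prec q(z)$ from \eqref{subordinate}, preservation of subordination under real powers, reduction to the sector containment of $q(\D)$ (equivalently the bound $|\alpha|\sup_{\D}|\arg q| \leq \pi/2$), and maximization of $\arg q(e^{i\theta}) = \arctan\varsigma(\theta) - \theta$ via the critical equation $\varsigma'(\theta)-\varsigma(\theta)^2-1=0$, with sharpness from $g=\phi$. The residual checks you flag (branch of $\arctan$ past the sign change of $\Re\phi(e^{i\theta})$, uniqueness of the critical point) are precisely the points the paper settles numerically with Mathematica, while your explicit justification of continuity of $\arg q$ up to $\closure{\D}$ and the maximum principle is a detail the paper leaves implicit.
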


\begin{proof}
Suppose that $f \in \R$. 
Again, we will make use of the relation \eqref{subordinate}, namely,
\begin{equation*}
J[f]' (z) \prec q(z) \prec \frac{1+z}{1-z}
\end{equation*}
for all $f \in \R$.
%Here $q$ is a convex function because $\Re [1 + zq''(z)/q'(z)] > 1 + (-1)q''(-1)/q'(-1) = -1+1/(2(2\log 2 -1))>0$ for all $z \in \D$.
Here $q$ is a convex function (see \cite[Theorem 2]{Libera:1965}).
Since $f \prec g$ implies that $f^{\alpha} \prec g ^{\alpha}$ for any $\alpha \in \mathbb{R}$ and $(J[f]')^{\alpha} = J_{\alpha}[f]'$, our problem reduces to finding the largest $\alpha_{0} \in \mathbb{R}$ such that
$
\Re [q(z)^{\alpha_{0}}] > 0
$
for all $z \in \D$.
It is equivalent to find the smallest $\beta_{0} \in \mathbb{R}$ such that the sector domain $\Delta_{\beta_0} := \{w : |\arg w| < \pi\beta_{0}/2\}$ contains $q(\D)$. 
Then $\alpha_{0} = 1/\beta_{0}$ (note that $z \in \Delta_{\beta_0}$ then $1/z \in \Delta_{\beta_0}$).

One obtains
\begin{eqnarray*}
\arg q(e^{i\theta}) 
&=& 
\arg \left[-e^{i\theta} -2 \log \left(2\sin\frac{\theta}{2}\right) -i(\theta - \pi) \right] -\theta\\
&=&
\arctan  \varsigma(\theta) -\theta
%\frac
%{\dstyle \sin \theta - \frac{\theta}{2} - \frac{3\pi}2}
%{\dstyle \cos \theta + 2 \log \left(2\sin\frac{\theta}{2}\right)}.
\end{eqnarray*}
by using $1-e^{i\theta} = - 2i  \sin(\theta/2) e^{i\theta/2}$.
Since $\round \arg q(e^{i\theta}) /\round \theta= \varsigma'(\theta)/(1 + \varsigma(\theta)^{2}) -1$, $\beta_{0}$ is one of the zeros of $\varsigma'(\theta) - \varsigma(\theta)^2 -1$.
With the aid of Mathematica, one calculates that the maximum of $\arg q(e^{i\theta})$ is attained at $\theta_{0} \approx 1.141377$.
Then $\beta_{0}= 2q(\theta_{0})/\pi \approx 0.580356$ and we conclude that $\alpha_{0}  = 1/\beta_0 \approx 1.723078$.
\end{proof}

\begin{rem}
$\R$ is preserved by the Alexander transformation $J[f]$.
\end{rem}

Theorem \ref{44} will be refined to a quasiconformal extension criterion by using the Loewner chains in Section \ref{loewner}.

	%	++++++++++++++++++++++++++++++++++++++++++++++++++++++
	%
	%		Section 4
			\section{Results for $I_{\alpha}[f]$ on $\R$}
	%
	%	++++++++++++++++++++++++++++++++++++++++++++++++++++++

We will derive some further properties of $I_{\alpha}[f]$ on $\R$.
In particular, Theorem \ref{remforI02} will be used in the later section.

\begin{thm}\label{remforI01}
Let $\alpha \in \mathbb{R}$ and $f \in \R$.
If $\alpha \in [-1, 1]$, then $I_{\alpha}[f] \in \R$.
\end{thm}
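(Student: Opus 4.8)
The plan is to reduce the claim to a pointwise statement about the argument of $f'$. Since $I_{\alpha}[f](z) = \int_0^z (f'(u))^{\alpha}\,du$, we have $I_{\alpha}[f]'(z) = (f'(z))^{\alpha}$, and $I_{\alpha}[f](0)=0$, $I_{\alpha}[f]'(0) = (f'(0))^{\alpha} = 1$, so $I_{\alpha}[f] \in \A$ automatically. Thus proving $I_{\alpha}[f] \in \R$ amounts exactly to showing $\Re\,(f'(z))^{\alpha} > 0$ for every $z \in \D$. First I would fix the branch: because $f \in \R$, the image $f'(\D)$ lies in the open right half-plane $\{w : \Re w > 0\}$, which is simply connected and omits the origin, with $f'(0) = 1$. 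Hence $\log f'$ admits a single-valued analytic branch on $\D$ normalized by $\log f'(0) = 0$, and along this branch $\Im \log f'(z) = \arg f'(z) \in (-\pi/2,\pi/2)$ for all $z \in \D$. By the prescribed normalization $(f')^{\alpha}(0) = 1$, the relevant branch of $(f')^{\alpha}$ is precisely $\exp(\alpha \log f')$ with this $\log f'$.

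With the branch fixed, the heart of the argument is a one-line estimate on arguments. Since $\alpha \in \R$, we have $\arg (f'(z))^{\alpha} = \Im\bigl(\alpha \log f'(z)\bigr) = \alpha\, \arg f'(z)$, and therefore
\[
\bigl|\arg (f'(z))^{\alpha}\bigr| = |\alpha|\cdot \bigl|\arg f'(z)\bigr| < |\alpha|\cdot \frac{\pi}{2} \leq \frac{\pi}{2}
\]
whenever $|\alpha| \leq 1$. Consequently $(f'(z))^{\alpha}$ lies in the open right half-plane, i.e. $\Re\,(f'(z))^{\alpha} > 0$ for all $z \in \D$, which is exactly the assertion $I_{\alpha}[f] \in \R$.

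I do not expect a genuine obstacle here; the proof is essentially immediate once the branch is correctly specified. The only point requiring a moment of care is the endpoint case $|\alpha| = 1$: one might worry that the image $(f')^{\alpha}(\D)$ could touch the imaginary axis. This is ruled out by the strictness $|\arg f'(z)| < \pi/2$ coming from $f'(\D)$ lying in the \emph{open} right half-plane, which is preserved under multiplication by $|\alpha| \leq 1$. As an alternative formulation, I could instead invoke the subordination $f'(z) \prec (1+z)/(1-z)$ valid for all $f \in \R$, together with the implication $f \prec g \Rightarrow f^{\alpha} \prec g^{\alpha}$ used in the proof of Theorem~\ref{44}, to obtain $(f')^{\alpha} \prec \bigl((1+z)/(1-z)\bigr)^{\alpha}$; since $(1+z)/(1-z)$ maps $\D$ onto the right half-plane and raising to the power $\alpha$ with $|\alpha| \leq 1$ keeps the image inside it, the same conclusion follows.
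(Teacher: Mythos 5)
Your proof is correct and follows essentially the same route as the paper, whose entire argument reads: ``Since $I_{\alpha}[f]' = (f')^{\alpha}$, it is clear that $I_{\alpha}[f] \in \R$ when $\alpha \in [-1,1]$.'' Your branch specification and the estimate $|\arg (f'(z))^{\alpha}| = |\alpha|\,|\arg f'(z)| < \pi/2$ are precisely the details the paper leaves implicit behind ``it is clear.''
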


\begin{proof}
Since $I_{\alpha}[f]' = (f')^{\alpha}$, it is clear that $I_{\alpha}[f] \in \R$ when $\alpha \in [-1,1]$.
\end{proof}

\begin{thm}\label{remforI02}
Let $\alpha \in \C$.
If $|\alpha|>1$, then there exists a function $g \in \R$ such that $I_{\alpha}[g] \notin \S$.
\end{thm}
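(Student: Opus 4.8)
The plan is to exhibit a single extremal member of $\R$ and read the non-univalence off the local behaviour of the transform at a boundary point, obtaining the threshold $|\alpha|>1$ on the nose. Take $g=\phi$ with $\phi'(z)=(1+z)/(1-z)$; then $\phi\in\R$ (its derivative maps $\D$ onto the right half-plane) and $F:=I_\alpha[\phi]$ satisfies $F'(z)=\big((1+z)/(1-z)\big)^{\alpha}$. Since $F'$ never vanishes on $\D$, $F$ is locally univalent, so the failure must be global; it will come from the fact that $\phi'$ has a zero at $z=-1$ and a pole at $z=+1$, which makes $F'$ acquire a fractional zero/pole there and forces $F$ to fold over itself near one of these two boundary points.

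First I would localise at $z=-1$, which works whenever $\Re\alpha\ge 0$ (so that $\Re\alpha>-1$ and $F(-1)$ is finite). Writing $F'(z)=(1+z)^{\alpha}A(z)$ with $A(z)=(1-z)^{-\alpha}$ holomorphic and non-vanishing near $-1$, termwise integration gives
\[
F(z)-F(-1)=\frac{2^{-\alpha}}{\alpha+1}\,(1+z)^{\alpha+1}\bigl(1+O(1+z)\bigr),
\]
the bracket being a holomorphic function of $1+z$ (there is no resonance since $\Re\alpha>-1$). Hence $\Phi(z):=\bigl[(\alpha+1)2^{\alpha}(F(z)-F(-1))\bigr]^{1/(\alpha+1)}=(1+z)\bigl(1+O(1+z)\bigr)$ is holomorphic and univalent on a small sector at $-1$, and $F(z_1)=F(z_2)$ holds exactly when $\Phi(z_1)=\Phi(z_2)\,e^{2\pi i m/(\alpha+1)}$ for some integer $m$.

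Next I would solve this coincidence on the boundary wedge. The disk subtends an angle $\pi$ at $-1$, i.e. $\arg(1+z)\in(-\pi/2,\pi/2)$, and since $\Phi$ is a near-identity, $\arg\Phi$ ranges over essentially the same interval. Taking $m=1$ and $p=\alpha+1$ (so $\Re p\ge 1$ in this regime), the factor $e^{2\pi i/p}$ rotates arguments by $\vartheta=2\pi\,\Re p/|p|^{2}$ and scales moduli by a fixed positive constant; one can realise $\Phi(z_1)=\Phi(z_2)e^{2\pi i/p}$ with $z_1\neq z_2$ both in $\D$ precisely when the two arguments fit into $(-\pi/2,\pi/2)$, that is when $|\vartheta|<\pi$. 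A direct computation gives $|\vartheta|<\pi\iff 2\Re p<|p|^{2}\iff |p-1|^{2}>1\iff|\alpha|^{2}>1$, which is exactly the hypothesis. Choosing $\arg\Phi(z_2)=-\vartheta/2$ and the moduli small then yields two distinct interior points with $F(z_1)=F(z_2)$, so $F=I_\alpha[\phi]\notin\S$. For $\Re\alpha\le 0$ I would run the identical argument at the pole $z=+1$, where $F(z)-F(1)\sim c\,(1-z)^{1-\alpha}$ with exponent $q=1-\alpha$ satisfying $\Re q\ge 1$ and $|q-1|^{2}=|\alpha|^{2}>1$; the two ranges $\Re\alpha\ge 0$ and $\Re\alpha\le 0$ together cover every admissible $\alpha$.

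The routine parts are the Taylor expansion above and the bookkeeping of the branch of $(\phi')^{\alpha}$. The main obstacle is the passage from the local model $\zeta\mapsto\zeta^{\alpha+1}$ to the genuine map $F$: one must check that the coincidence predicted by the model survives for the true $F$, i.e. that the rotated-and-scaled target $\Phi(z_2)e^{2\pi i/p}$ really lies in the image $\Phi(\text{sector})$, so that it has an honest preimage $z_1\in\D$ with $z_1\neq z_2$. This is exactly where the strict inequality $|\alpha|>1$ (rather than $|\alpha|\ge 1$) is used, and where the near-identity estimate on $\Phi$ together with a small loss in the sector opening has to be controlled.
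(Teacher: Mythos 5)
Your proposal is correct in substance, but it proves the theorem by a genuinely different route than the paper. The paper uses the very same counterexample $\phi(z)=-z-2\log(1-z)$, but then it is a three-line computation: one checks that $\|S_{I_{\alpha}[\phi]}\|=2|\alpha|(|\alpha|+2)$ and invokes the Kraus--Nehari necessary condition (Theorem 2.A(v): $f\in\S$ implies $\|S_f\|\le 6$), so $I_{\alpha}[\phi]\notin\S$ as soon as $2|\alpha|(|\alpha|+2)>6$, i.e.\ exactly when $|\alpha|>1$. You instead prove non-univalence constructively, by the local sector model at the boundary singularities: near $z=-1$ (when $\Re\alpha\ge 0$) you have $F(z)-F(-1)\sim c\,(1+z)^{p}$ with $p=\alpha+1$, the disk subtends an angle $\pi$ there, and the power map $\zeta\mapsto\zeta^{p}$ identifies two points of the sector precisely when the rotation $2\pi\Re p/|p|^{2}$ is less than $\pi$, which unwinds to $|p-1|^2=|\alpha|^{2}>1$; the pole at $z=+1$ with exponent $q=1-\alpha$ handles $\Re\alpha\le 0$. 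Your branch bookkeeping, the non-resonance of the exponents ($\Re(\alpha+1+k)\ge 1$, resp.\ $\Re(q+k)\ge 1$), the passage through $\log\Phi$ (where the image is a slightly distorted half-strip of height $\pi$, infinite to the left, so the fixed translation $2\pi i/p$ fits whenever $|\alpha|>1$ strictly), and the conclusion $z_1\ne z_2$ in $\D$ with $F(z_1)=F(z_2)$ all check out; the only overstatement is the phrase ``precisely when'' for the distorted region --- you only need, and only really justify, the existence direction, which the strict inequality supplies. What each approach buys: the paper's argument is extremely short but non-constructive and leans on the deep sharp constant $6$ in the Kraus--Nehari theorem; yours is elementary, self-contained, exhibits explicit points of non-injectivity, explains structurally why the threshold is exactly $|\alpha|=1$ (it is the Royster-type analysis of maps with derivative $((1+z)/(1-z))^{\alpha}$), and shows the failure already occurs in arbitrarily small neighborhoods of a single boundary point.
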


\begin{proof}
A counterexample is given by the function $\phi(z) = -z-2\log(1-z)$ which belongs to $\R$. 
In fact, it follows from the calculations that
\begin{equation*}
||S_{I_{\alpha}[\phi]}|| = 2 |\alpha|(|\alpha| + 2).
\end{equation*}
Then Theorem \ref{normT}-(v) shows that $I_{\alpha}[\phi]$ is not univalent if $|\alpha| > 1$.
\end{proof}

\begin{thm}\label{remforI03}
Let $\alpha \in \C$ and $f \in \R$.
If $|\alpha|\leq 1/2$, then $I_{\alpha}[f] \in \S$.
\end{thm}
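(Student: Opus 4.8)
The plan is to control the pre-Schwarzian norm of $I_{\alpha}[f]$ and then invoke Becker's univalence criterion, Theorem \ref{normT}-(i). Since $I_{\alpha}[f]'=(f')^{\alpha}$, a logarithmic differentiation gives $T_{I_{\alpha}[f]}=\alpha T_{f}$, exactly the manipulation behind the identity $||T_{J_{\alpha}[f]}||=|\alpha|\,||T_{J[f]}||$ used in the first theorem of Section 3. Consequently $||T_{I_{\alpha}[f]}||=|\alpha|\,||T_{f}||$, and the whole statement reduces to finding the sharp bound on $||T_{f}||$ over the class $\R$.

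The key step is therefore to show that $||T_{f}||\le 2$ for every $f\in\R$. First I would note that $f\in\R$ means $\Re f'(z)>0$ with $f'(0)=1$, so $f'$ is subordinate to the half-plane map $(1+z)/(1-z)=\phi'(z)$, where $\phi(z)=-z-2\log(1-z)$ is the function singled out in Section 2.2. Since $\phi'$ is non-vanishing, $\phi$ is locally univalent, and Theorem \ref{KS} gives $||T_{f}||\le||T_{\phi}||$. A direct computation yields $T_{\phi}=2/(1-z^{2})$, and because $1-|z|^{2}=1-|z^{2}|\le|1-z^{2}|$ one gets $(1-|z|^{2})|T_{\phi}(z)|\le 2$, with equality all along the real diameter; hence $||T_{\phi}||=2$. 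This establishes $||T_{f}||\le 2$ for all $f\in\R$, the bound being attained by $\phi$ itself.

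Combining the two steps, $||T_{I_{\alpha}[f]}||=|\alpha|\,||T_{f}||\le 2|\alpha|$. When $|\alpha|\le 1/2$ this is at most $1$, so Theorem \ref{normT}-(i) yields $I_{\alpha}[f]\in\S$, as claimed. One should also record the routine membership $I_{\alpha}[f]\in\A\cap\LU$: the integral vanishes at the origin, $I_{\alpha}[f]'(0)=(f'(0))^{\alpha}=1$ for the chosen branch, and $I_{\alpha}[f]'=(f')^{\alpha}\ne 0$.

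I expect no serious obstacle; the only substantive ingredient is the sharp estimate $||T_{f}||\le 2$ on $\R$, which is itself immediate once the subordination $f'\prec\phi'$ and the value $||T_{\phi}||=2$ are in hand. As an alternative to Theorem \ref{KS}, the same bound follows from the Schwarz--Pick inequality $|f''(z)|\le 2\,\Re f'(z)/(1-|z|^{2})$ for functions with $\Re f'>0$, together with $\Re f'\le|f'|$. It is worth remarking that $|\alpha|=1/2$ is precisely the threshold at which $||T_{I_{\alpha}[f]}||\le 1$ can be guaranteed, so that improving the range would demand information finer than the pre-Schwarzian norm.
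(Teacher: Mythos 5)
Your proposal is correct and follows essentially the same route as the paper: the identity $\|T_{I_{\alpha}[f]}\|=|\alpha|\,\|T_{f}\|$, the subordination $f'\prec(1+z)/(1-z)$ combined with Theorem \ref{KS} to get the sharp bound $\|T_{f}\|\leq 2$ (the paper simply cites MacGregor for this, where you compute $\|T_{\phi}\|=2$ directly), and then Becker's criterion, Theorem \ref{normT}-(i). If anything, your write-up is slightly more careful than the paper's, which concludes with the strict inequality $|\alpha|<1/2$ even though, as you note, $\|T_{I_{\alpha}[f]}\|\leq 1$ also holds at $|\alpha|=1/2$, so the stated closed range is covered.
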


\begin{proof}
Let $f \in \R$. Then $f'(z) \prec (1+z)/(1-z)$, and hence by Theorem \ref{KS} we obtain the sharp bound
$
||T_{f}|| \leq 2
$
for a $f \in \R$ (see also \cite[Lemma 1]{MacGregor:1963I}).
Since $||T_{I_{\alpha}[f]}|| = |\alpha|\cdot ||T_{f}||$, it follows from Theorem \ref{normT}-(i) that $I_{\alpha}[f] \in \S$ if $|\alpha|< 1/2$.
\end{proof}

			\section{Quasiconformal extension of $J_{\alpha}[f]$ with Loewner chains}
			\label{loewner}
	%
	%	++++++++++++++++++++++++++++++++++++++++++++++++++++++

In this section we will make use of the theory of Loewner chains and its applications to derive quasiconformal extension conditions for $J_{\alpha}[f]$ and $I_{\alpha}[f]$ under the class $\R$.

	%	+++++++++++++++++++++++++++++++++++++++++++
	%
	%		5-1
			\subsection{Loewner chains and inverse Loewner chains}
	%
	%	+++++++++++++++++++++++++++++++++++++++++++

Before starting our argument we describe the theory of Loewner chains and results of quasiconformal extensions due to Becker and Betker with some notations and terminology we will use.

Let $f_{t}(z) = \sum_{n=1}^{\infty}a_{n}(t)z^{n}$, $a_{1}(t) \neq 0$, be a function defined on $\D \times [0,\infty)$, where $a_{1}(t)$ is a complex-valued, locally absolutely continuous function on $[0,\infty)$.
Then $f_{t}$ is called a \textit{Loewner chain} if $f_{t}$ satisfies the following conditions;

\def\labelenumi{\it \arabic{enumi}.}
\begin{enumerate}
\item $f_{t}$ is univalent in $\D$ for each $t \geq 0$,
\item $|a_1(t)|$ increases strictly monotonically as $t$ increases, and $\lim_{t \to \infty}|a_1(t)| \to \infty$,
\item $f_{s}(\D) \subset f_{t}(\D)$ for $0 \leq s < t < \infty$.
\end{enumerate}

\no
We remark that strictly monotonicity of $|a_1(t)|$ implies that $f_s(\D) \neq f_t(\D)$ for all $0 \leq s < t < \infty$.

The key properties of Loewner chains are that $f_t$ is absolutely continuous on $t \geq 0$ for each $z \in \D$ which implies $\round_t f_t \,\,(\round_t := \round / \round t) $ exists almost everywhere on $[0,\infty)$, and satisfies the partial differential equation
\begin{equation}\label{LKPDE}
\round_t f_t(z) = z \round_z f_t(z) p(z,t)\hspace{15pt}(z \in \D,\,\textup{a.e.}\, t \geq 0),
\end{equation}
where $p(z,t)$ is analytic for all $z \in \D$ for each $t \geq 0$, measurable for all $t \geq 0$ for each $z \in \D$ and satisfies $\Re p(z,t) >0$ for all $z \in \D$ and $t \geq 0$.
We call such a function $p$ a \textit{Herglotz function}.
Further, Becker \cite{Becker:1972, Becker:1976} showed that if $p$ satisfies
\begin{equation*}
\left|\frac{1-p(z,t)}{1+p(z,t)}\right| \leq k \hspace{15pt}( z \in \D,\,\textup{a.e.}\,t \geq 0)
\end{equation*}
then $f_0$ has a $k$-quasiconformal extension to $\C$.
It enables us to derive various kinds of sufficient conditions under which a function $f \in \S$ has a quasiconformal extension (see e.g. \cite{Hotta:2009, Hotta:2010a}).

Betker introduced the following notion of inverse counterparts of Loewner chains.
Let $\omega_{t}(z) = \sum_{n=1}^{\infty}b_{n}(t)z^{n}$, $b_{1}(t) \neq 0$, be a function defined on $\D \times [0,\infty)$, where $b_{1}(t)$ is a complex-valued, locally absolutely continuous function on $[0,\infty)$.
Then $\omega_t$ is said to be an \textit{inverse Loewner chains} if 
\def\labelenumi{\it \arabic{enumi}.}
\begin{enumerate}
\item $\omega_t$ is univalent in $\D$ for each $t \geq 0$,
\item $|b_1(t)|$ decreases strictly monotonically as $t$ increases, and $\lim_{t \to \infty}|b_1(t)| \to 0$,
\item $\omega_{s}(\D) \supset \omega_{t}(\D)$ for $0 \leq s < t < \infty$,
\item $\omega_0(z) = z$ and $\omega_s(0) = \omega_t(0)$ for $0 \leq s \leq t < \infty$.
\end{enumerate}
$\omega$ also satisfies the partial differential equation:
\begin{equation}\label{bb}
\round_{t}\omega_t(z) = - z \round_{z} \omega_t(z) q(z,t)\hspace{15pt}(z \in \D,\,\textup{a.e.}\, t \geq 0),
\end{equation}
where $q$ is a Herglotz function.
Conversely, we can construct an inverse Loewner chain by means of \eqref{bb} according to the following lemma:
\begin{knownlem}[Betker \cite{Betker:1992}]
Let $q(z,t)$ be a Herglotz function.
Suppose that $q(0,t)$ be locally integrable in $[0, \infty)$ with $\int_0^{\infty} \Re q(0, t) dt = \infty$. 
Then there exists an inverse Loewner chain $w_t$ satisfying \eqref{bb}.
\end{knownlem}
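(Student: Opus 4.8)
The plan is to construct $\omega_t$ by the method of characteristics applied directly to the prescribed equation \eqref{bb}, which is precisely the ``decreasing'' analogue of the classical Loewner ODE. The characteristics of \eqref{bb} are the solutions of the initial value problem
\[
\frac{d}{d\tau}\zeta(\tau) = \zeta(\tau)\, q(\zeta(\tau),\tau), \qquad \zeta(s)=z,
\]
and along such a curve one computes $\frac{d}{d\tau}\,\omega_\tau(\zeta(\tau)) = \partial_\tau\omega_\tau + \partial_z\omega_\tau\cdot\dot\zeta = 0$, so $\omega_\tau$ is forced to be constant on characteristics. Writing $v_{s,t}$ for the transition map carrying $z=\zeta(s)$ to $\zeta(t)$, the relation $\omega_t\circ v_{0,t}=\omega_0=\mathrm{id}$ then dictates the definition $\omega_t := v_{0,t}^{-1}$. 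First I would establish, via the Carath\'eodory theory of ODEs with right-hand side merely measurable in $t$, that $v_{0,t}$ exists, is holomorphic and univalent in $z$, is locally absolutely continuous in $t$, and fixes the origin (since $\zeta\equiv 0$ solves the ODE).

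The observation that makes the construction go through is that $\frac{d}{d\tau}\log|\zeta|^2 = 2\,\Re q(\zeta,\tau) > 0$, so the forward flow strictly increases modulus. Consequently, running the flow backward from a point of $\D$ keeps it inside $\D$, which yields $\D\subseteq v_{0,t}(\D)$ and hence that $\omega_t=v_{0,t}^{-1}$ is well defined and univalent on all of $\D$. Differentiating the identity $v_{0,t}(\omega_t(w))=w$ in $t$ and substituting $\partial_t v_{0,t}(z)=v_{0,t}(z)\,q(v_{0,t}(z),t)$ recovers \eqref{bb}. Linearizing the ODE at the fixed point gives $b_1(t)=\omega_t'(0)=\exp\!\big(-\int_0^t q(0,\sigma)\,d\sigma\big)$, so that $|b_1(t)|=\exp\!\big(-\int_0^t \Re q(0,\sigma)\,d\sigma\big)$ is strictly decreasing and, precisely because $\int_0^\infty \Re q(0,t)\,dt=\infty$, tends to $0$; this is where both hypotheses on $q(0,t)$ enter. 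The nesting $\omega_t(\D)\subset\omega_s(\D)$ for $s<t$ follows from the semigroup identity $v_{0,t}=v_{s,t}\circ v_{0,s}$, which gives $\omega_t=\omega_s\circ v_{s,t}^{-1}$ together with the inclusion $v_{s,t}^{-1}(\D)\subseteq\D$.

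The main obstacle I expect is not the formal computation but the rigorous control of the characteristic flow under the weak regularity available: because $q$ is only measurable in $t$, existence, uniqueness, and — crucially — holomorphic dependence on the initial value $z$ must be obtained in the Carath\'eodory framework, and one must verify that backward trajectories starting in $\D$ never reach $\partial\D$ in finite time, so that $v_{0,t}$ maps onto a domain containing $\D$ and its inverse remains holomorphic and univalent there. Establishing these domain and regularity properties carefully, rather than the identity \eqref{bb} itself, is the technical heart of the argument.
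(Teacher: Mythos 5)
Your argument is correct in outline and arrives at the same object as the paper, but by a genuinely different route. The paper treats the lemma as known (it is Betker's) and, in the discussion following Betker's theorem --- the construction around \eqref{q(z,t)} and \eqref{g_t(z)} --- proceeds by time reversal at the level of Loewner \emph{chains}: fix an arbitrary horizon $T>0$, reverse the Herglotz function on $[0,T]$, invoke the classical existence theorem to obtain a chain $h_t$ with Herglotz function $q(z,T-t)$, normalize through the transition maps $g_t=h_T^{-1}\circ h_t$ (uniquely determined by $g_T(z)=z$), set $\omega_t:=g_{T-t}$, and finally let $T$ vary, gluing by uniqueness. You instead build $\omega_t$ directly as the backward flow of the characteristic ODE $\dot\zeta=\zeta\, q(\zeta,\tau)$ in the Carath\'eodory framework. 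These are two dressings of the same mechanism --- the transition maps $g_t$ of the auxiliary chain are precisely your flow maps $v_{s,t}$ --- so the trade-off is what one takes as known: the paper outsources all measurable-in-$t$ regularity (existence, local absolute continuity, holomorphic dependence) to the classical Loewner existence theorem, at the cost of the auxiliary chain, the normalization $g_T=\mathrm{id}$, and the gluing-over-$T$ step; your version is self-contained and yields $\omega_t$ for all $t\geq 0$ at once, at the cost of carrying out the Carath\'eodory ODE work by hand, which you rightly identify as the technical heart.

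Two points need tightening, both within the scope you already flagged. First, the forward map $v_{0,t}$ is \emph{not} defined on all of $\D$: forward characteristics strictly increase modulus and may leave $\D$ in finite time, so the assertion ``$\D\subseteq v_{0,t}(\D)$'' should read that $v_{0,t}$ is defined on the subdomain $D_t\subseteq\D$ of initial points whose trajectories stay in $\D$ up to time $t$, and your backward-solvability observation shows exactly that $v_{0,t}(D_t)=\D$, so that $\omega_t:=v_{0,t}^{-1}$ is holomorphic and univalent on all of $\D$ with values in $D_t$. Second, the local integrability of $q(0,t)$ enters before the coefficient formula $b_1(t)=\exp\bigl(-\int_0^t q(0,\sigma)\,d\sigma\bigr)$: it is what furnishes the Carath\'eodory majorant, since for a Herglotz function one has $|q(z,t)|\leq \Re q(0,t)\,\frac{1+r}{1-r}+|\Im q(0,t)|$ on $\{\,|z|\leq r\,\}$, and this bound, combined with the fact that backward trajectories are confined to $\{\,|z|\leq r\,\}$, is what guarantees global backward existence of the flow. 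With these glosses, your verification of \eqref{bb}, the nesting via $v_{0,t}=v_{s,t}\circ v_{0,s}$, and the use of $\int_0^\infty \Re q(0,t)\,dt=\infty$ to force $|b_1(t)|\to 0$ are all sound.
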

Applying the notion of an inverse Loewner chain, we obtain a generalization of Becker's result.

\begin{knownthm}[Betker \cite{Betker:1992}]
Let $k \in (0,1]$.
Let $f_t$ be a Loewner chain satisfying \eqref{LKPDE} with
\begin{equation*}
\left|
\frac{p(z,t) -\closure{q(z,t)}}{p(z,t) + q(z,t)}
\right|
\leq k <1\hspace{20pt}
(z \in \D, \textup{a.e.}\, t \geq 0)
\end{equation*}
where $q(z,t)$ is a Herglotz function.
Let $\omega_t$ be the inverse Loewner chain which is generated by $q$ with \eqref{bb}. 
Then $f_t$ and $\omega_t$ are continuous and injective on $\closure{\D}$ for each $t \geq 0$, and $f_0$ has a $k$-quasiconformal extension $\Phi : \CC \to \CC$ which is defined by
\begin{equation}\label{betker}%%
\Phi\left(\frac{1}{\closure{\omega_t(e^{i\theta})}}\right) = f_t(e^{i\theta}) \hspace{20pt} (\theta \in [0,2\pi), t \geq 0).
\end{equation}
\end{knownthm}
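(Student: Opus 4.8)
The plan is to build the extension $\Phi$ explicitly out of the two chains, check that it is a homeomorphism of $\CC$, and then read off its complex dilatation directly from the two defining equations \eqref{LKPDE} and \eqref{bb}. First I would settle the boundary regularity asserted in the statement: using the combined bound $|(p-\closure{q})/(p+q)| \le k$ together with Becker-type estimates for the individual chains, one shows that each $f_t$ and each $\omega_t$ extends continuously and injectively to $\closure{\D}$, so that $f_t(\round\D)$ and $\omega_t(\round\D)$ are Jordan curves. Since $|b_1(t)| \searrow 0$, the domains $\omega_t(\D)$ decrease strictly to $\{0\}$ and all contain the origin (because $\omega_t(0)=0$); applying the inversion $w \mapsto 1/\closure{w}$ then turns the curves $\omega_t(\round\D)$ into nested Jordan curves $\gamma_t := \{1/\closure{\omega_t(e^{i\theta})} : \theta \in [0,2\pi)\}$ which sweep the exterior $\{|\zeta|>1\}$ monotonically as $t$ runs over $(0,\infty)$, with $\gamma_0 = \round\D$.

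Next I would define $\Phi := f_0$ on $\closure{\D}$ and by \eqref{betker} on $\{|\zeta|>1\}$, and verify consistency: at $t=0$ one has $\omega_0(z)=z$, so $1/\closure{\omega_0(e^{i\theta})} = e^{i\theta}$ and \eqref{betker} returns $f_0(e^{i\theta})$, matching the interior definition on $\round\D$. The foliation from the first step guarantees that every exterior point has a unique representation $\zeta = 1/\closure{\omega_t(e^{i\theta})}$, so $\Phi$ is well defined there; the strict monotonicity of the two chains, together with continuity of the boundary maps, then yields that $\Phi$ is a homeomorphism of $\CC$ sending $\D$ onto $f_0(\D)$ and the exterior onto its complement.

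The crux is the dilatation estimate on $\{|\zeta|>1\}$. Writing $z=e^{i\theta}$ and using the two partial differential equations to compute the differentials of $\Phi$ and $\zeta$ along the boundary, one gets
\begin{equation*}
d\Phi = z\,\round_z f_t\,(p\,dt + i\,d\theta),\qquad d\zeta = B\,(\closure{q}\,dt + i\,d\theta),\quad B:=\frac{\closure{z}\,\closure{\round_z\omega_t}}{\closure{\omega_t}^{\,2}}.
\end{equation*}
Inverting the linear system that expresses $(dt,d\theta)$ through $(d\zeta,d\closure{\zeta})$ and substituting into $d\Phi = \round_\zeta\Phi\,d\zeta + \round_{\closure{\zeta}}\Phi\,d\closure{\zeta}$ gives, after the cancellations,
\begin{equation*}
\mu_\Phi = \frac{\round_{\closure{\zeta}}\Phi}{\round_\zeta\Phi} = \frac{B}{\closure{B}}\cdot\frac{p(z,t)-\closure{q(z,t)}}{p(z,t)+q(z,t)}.
\end{equation*}
Because $|B/\closure{B}| = 1$, the hypothesis forces $|\mu_\Phi| \le k$ almost everywhere on the exterior, while $\Phi = f_0$ is conformal on $\D$.

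Finally, with a homeomorphism $\Phi$ of $\CC$ satisfying $|\mu_\Phi| \le k < 1$ off the measure-zero circle $\round\D$, I would invoke the analytic characterization of quasiconformality to conclude that $\Phi$ is $k$-quasiconformal. The formal dilatation computation above is the clean heart of the argument; the main obstacle is the analytic regularity needed to legitimize it. Concretely, one must establish the continuous injective extension of both chains to $\closure{\D}$, verify that $(\theta,t)\mapsto\zeta$ is a genuine, locally absolutely continuous change of variables so that the chain rule and the formula for $\mu_\Phi$ are valid almost everywhere, and confirm that $\Phi$ is absolutely continuous on lines, so that the analytic definition of $k$-quasiconformality indeed applies.
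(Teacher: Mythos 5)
The paper itself offers no proof of this statement---it is quoted as a known theorem directly from Betker \cite{Betker:1992}---so your proposal can only be measured against Betker's original argument. Its formal core you have reproduced correctly, and it is indeed the recognized heart of the proof: with $\zeta = 1/\closure{\omega_t(e^{i\theta})}$, the equations \eqref{LKPDE} and \eqref{bb} give $d\Phi = z\round_z f_t\,(p\,dt + i\,d\theta)$ and $d\zeta = B(\closure{q}\,dt + i\,d\theta)$ with your $B$, and solving the linear system does yield $\mu_\Phi = (B/\closure{B})\,(p-\closure{q})/(p+q)$, hence $|\mu_\Phi| \le k$ almost everywhere outside $\closure{\D}$.

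However, three of your supporting steps contain genuine gaps, and they are precisely where the substance lies. First, the boundary regularity cannot be had ``by Becker-type estimates for the individual chains'': the hypothesis bounds only the coupled quantity $(p-\closure{q})/(p+q)$, and neither $p$ nor $q$ separately need satisfy Becker's condition $|(1-p)/(1+p)| \le k' < 1$ (already $q = p$ constrains only $\arg p$, leaving $|p|$ unbounded), so continuous injective extension of $f_t$ and $\omega_t$ to $\closure{\D}$ is not available chain-by-chain. Second, your foliation claim is unjustified: $|b_1(t)| \searrow 0$ is decay of the conformal radius of $\omega_t(\D)$ at $0$, which does not force these nested domains to shrink to $\{0\}$ in diameter (they may retain a tentacle of fixed size forever), so surjectivity of $(t,\theta) \mapsto 1/\closure{\omega_t(e^{i\theta})}$ onto $\{|\zeta|>1\}$ needs an argument; likewise, injectivity of $\Phi$ does not follow from ``strict monotonicity plus continuity,'' since boundaries of strictly nested image domains can share arcs---slit chains show that naive boundary gluing can collapse. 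Third, $p$ and $q$ are merely measurable in $t$ and $\theta \mapsto f_t(e^{i\theta})$ is a priori only continuous, so your differentials on $\round\D$ have no pointwise meaning; you name this obstacle but give no mechanism to remove it. The standard mechanism---and the one that fills all three holes at once---is approximation plus compactness: run the computation for the dilated chains $f_t(rz)$ and $\omega_t(rz)/r$ with $r<1$, which satisfy the same hypothesis at the point $rz$ and are analytic and injective on a neighbourhood of $\closure{\D}$, so the gluing is a genuine homeomorphism and the a.e.\ chain rule is legitimate; this produces $k$-quasiconformal homeomorphisms $\Phi_r$ of $\CC$ (no separate ACL verification needed for the limit, since locally uniform limits of $k$-quasiconformal maps are $k$-quasiconformal), and compactness of suitably normalized $k$-quasiconformal mappings as $r \to 1$ delivers simultaneously the extension $\Phi$, formula \eqref{betker}, and---as a consequence rather than a prerequisite---the continuity and injectivity of $f_t$ and $\omega_t$ on $\closure{\D}$. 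Your outline records the right identity for $\mu_\Phi$, but with the logical order reversed, it is not yet a proof.
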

The case $q(z,t) = 1$ reflects Becker's theorem. In this case $\omega_t(z) = e^{-t}z$.
Further, if $\omega$ is obtained from the choice $q = p$, then we have the following corollary:

\begin{knowncor}[Betker \cite{Betker:1992}]
\label{Betkercor}
Let $\gamma \in (0,1]$.
Suppose that $f_t$ is a Loewner chain for which $p$ in \eqref{LKPDE} satisfies the condition 
\begin{equation*}
|\arg p(z,t)| \leq \frac{\gamma \pi}{2}\hspace{20pt}(z \in \D,\,\textup{a.e.}\,t \geq 0).
\end{equation*}
Then $f_{t}$ admits a continuous extension to $\closure{\D}$ for each $t \geq 0$ and the map defined by \eqref{betker} is a $\sin (\gamma \pi/2)$-quasiconformal extension of $f_{0}$ to $\C$.
\end{knowncor}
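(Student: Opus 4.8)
The plan is to specialize the preceding theorem of Betker to the diagonal choice $q(z,t) = p(z,t)$, so that the inverse Loewner chain $\omega_t$ is generated by the very same Herglotz function $p$ that drives $f_t$, and then to identify the resulting dilatation bound $k$ explicitly in terms of $\gamma$. With this choice the extension $\Phi$ is exactly the map \eqref{betker}, and the whole corollary reduces to checking that the admissibility and quantitative hypotheses of the theorem are met.

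First I would confirm that $q = p$ is admissible, i.e. that the lemma preceding the theorem applies and actually produces an inverse Loewner chain $\omega_t$ satisfying \eqref{bb}. The requirement $\Re p(z,t) > 0$ is built into the notion of a Herglotz function, so it only remains to verify that $p(0,t)$ is locally integrable on $[0,\infty)$ and that $\int_0^\infty \Re p(0,t)\,dt = \infty$. Both follow from the defining properties of the Loewner chain $f_t$: comparing the coefficients of $z$ on the two sides of \eqref{LKPDE} gives $\round_t a_1(t) = a_1(t)\,p(0,t)$ almost everywhere, whence $p(0,t) = a_1'(t)/a_1(t)$. Since $|a_1(t)|$ increases strictly from $|a_1(0)| > 0$, the denominator is bounded away from $0$ on every compact interval while $a_1$ is locally absolutely continuous, so $p(0,t)$ is locally integrable; integrating the real part yields $|a_1(t)| = |a_1(0)|\exp\!\left(\int_0^t \Re p(0,s)\,ds\right)$, and the condition $\lim_{t\to\infty}|a_1(t)| = \infty$ forces $\int_0^\infty \Re p(0,t)\,dt = \infty$. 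Thus the lemma furnishes the required $\omega_t$.

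Next I would carry out the only genuine computation, namely evaluating the dilatation quotient of the theorem at $q = p$. One has
\begin{equation*}
\frac{p(z,t) - \closure{q(z,t)}}{p(z,t) + q(z,t)} = \frac{p(z,t) - \closure{p(z,t)}}{2\,p(z,t)} = \frac{i\,\Im p(z,t)}{p(z,t)},
\end{equation*}
so that writing $p = |p|e^{i\phi}$ with $\phi = \arg p(z,t)$ gives
\begin{equation*}
\left|\frac{p(z,t) - \closure{q(z,t)}}{p(z,t) + q(z,t)}\right| = \frac{|\Im p(z,t)|}{|p(z,t)|} = |\sin\phi|.
\end{equation*}
The hypothesis $|\arg p(z,t)| \leq \gamma\pi/2$ together with $\gamma \in (0,1]$ confines $\phi$ to $[-\pi/2,\pi/2]$, where $|\sin\phi| = \sin|\phi|$ is monotone in $|\phi|$; hence $|\sin\phi| \leq \sin(\gamma\pi/2)$. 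Therefore the hypothesis of the preceding theorem holds with $k = \sin(\gamma\pi/2)$, and its conclusion delivers exactly the continuous extension of $f_t$ to $\closure{\D}$ and the $\sin(\gamma\pi/2)$-quasiconformal extension $\Phi$ of $f_0$ defined by \eqref{betker}.

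The one point requiring care is the borderline value $\gamma = 1$, for which $k = \sin(\gamma\pi/2) = 1$ sits at the edge of the admissible range, whereas the theorem uses the strict inequality $k < 1$. I expect this to be the main (indeed the only) obstacle: I would either invoke the theorem in the form stated for $k \in (0,1]$, or dispose of $\gamma = 1$ by a limiting argument, observing that for every $\gamma' < 1$ the above already yields a $\sin(\gamma'\pi/2)$-quasiconformal extension. Apart from this edge case the proof is a direct substitution into Betker's theorem, and no further difficulty is anticipated.
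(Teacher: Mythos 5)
Your proposal is correct and takes essentially the same route the paper intends: the corollary is introduced there precisely as the specialization $q=p$ of Betker's theorem, and your dilatation computation $\left|\frac{p-\closure{p}}{2p}\right| = |\sin \arg p| \leq \sin(\gamma\pi/2)$, together with the verification via $p(0,t) = a_1'(t)/a_1(t)$ that the lemma produces the inverse chain, is exactly the intended argument. Your caveat about $\gamma=1$ is apt but reflects an ambiguity already present in the paper's own statement of the theorem ($k\in(0,1]$ versus the displayed condition $\leq k<1$), so it is not a defect of your proof.
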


In contrast to Becker's quasiconformal extension theorem mentioned above, the theorem due to Betker does not always give a quasiconformal extension explicitly.
The reason is due to the fact that in general it is difficult to express an inverse Loewner chain $\omega_t$ which has the same Herglotz function as a given Loewner chain $f_t$ in an explicit form.

In more detail, let $f_t$ be a given Loewner chain and $p(z,t)$ be a Herglotz function associated with $f_t$ by \eqref{LKPDE}.
Fix an arbitrary $T > 0$, and define a Herglotz function $q(z,t)$ by
\begin{equation}
\label{q(z,t)}
q(z,t) := \left\{
\begin{array}{lll}
p(z,T -t), & t \in [0, T]\\[3pt]
1, &  t \in (T, \infty). 
\end{array}
\right.
\end{equation}
%Then there exists a Loewner chain $h_t$ with the equation $\round_t h_t(z) = z \round_z h_t(z) q(z,t)$.
It is known that there exists a Loewner chain $h_t$ with the equation $\round_t h_t(z) = z \round_z h_t(z) q(z,t)$.
One can see that $g_t(z)$ defined by
\begin{equation}
\label{g_t(z)}
g_t(z) := \left\{
\begin{array}{lll}
(h_{T}^{-1} \circ h_t)(z), & t \in [0, T]\\[3pt]
e^{T -t}, &  t \in (T, \infty),
\end{array}
\right.
\end{equation}
is also a Loewner chain whose Herglotz function is $q$.
Such $g_t$ is uniquely determined by the condition $g_{T}(z) = z$.
Therefore $g_t$ is the unique solution of the differential equation
\begin{equation*}
\round_t g_t(z) = z \round_z g_t(z) p(z, T -t)
\end{equation*}
for all $z \in \D$ and $t \in [0,T]$.
Hence $\omega_t := g_{T -t}$ is defined on $z \in \D$, $t \in [0,T]$ and satisfies $\round_{t}\omega_t(z) = - z \round_{z} \omega_t(z) p(z,t)$.
It is also easily seen that $\omega_0(z) =z,\,\omega_t(0) =0,\,\omega_s(\D) \supset \omega_t(\D)$ and $b_1(t)$ is monotonically decreasing with $|b_1| \to 0$ as $t \to \infty$.
Since $T$ is arbitrary, we obtain our desired inverse Loewner chain.

%Since $(d/dt)\log|b_1(t)| = $

%Further, we can see that $\omega_0(z) =z$,\, 

%is defined at least for all $t \in [0,T]$

%Our desired inverse Loewner chain $\omega_t$ follows from $\omega_t := g_{T -t}$ and repeating this procedure. 
%In consequence, in order to obtain the concrete expression of $\omega_t$ we need to describe $h_t$ and $h_t^{-1}$ by a given $f_t$, and it is not always possible.

The above argument indicates that in order to obtain the concrete expression of $\omega_t$ we need to write $h_t$ and $h_t^{-1}$ by a given $f_t$, and it is not always possible.
Loewner chains for spirallike functions are one of the few known cases in which this method works well.
Here $f \in \A$ is said to be \textit{$\lambda$-spirallike} ($\lambda \in (-\pi/2, \pi/2)$) if $f$ satisfies
\begin{equation*}
\Re \left\{ e^{-i\lambda}\frac{zf'(z)}{f(z)} \right\}>0
\end{equation*}
for all $z \in \D$.
We know that $f_t(z) = e^{e^{i\lambda} t} f(z)$ describes an expanding flow for $\lambda$-spirallike domains.
In this case the corresponding inverse Loewner chain $\omega_t$ can be written explicitly by 
\begin{equation*}
\omega_t(z) := f^{-1}(e^{-e^{i\lambda} t} f(z)).
\end{equation*}
Let $\alpha \in (-\pi/2, \pi/2)$ be given. 
Suppose $|\arg (zf'(z)/f(z)) -\lambda| < \pi\alpha/2$.
Then by Corollary \ref{Betkercor} $f$ has a continuous extension to $\closure{\D}$, and the function $\Phi : \C \to \C$,
\begin{equation}
\label{explicit01}
\left\{
\begin{array}{ll}
\Phi(z) = f(z), & z \in \closure{\D}\\[5pt]
\dstyle\Phi\left(\frac{1}{\closure{f^{-1}(e^{-e^{i\lambda} t} f(e^{i\theta}))}}\right) = e^{e^{i\lambda} t} f(e^{i\theta}), & \theta \in [0,2\pi), t \geq 0.
\end{array}
\right.
\end{equation}
defines a $\sin(\pi \alpha/2)$-quasiconformal extension of $f$.
If $z =1/\closure{f^{-1}(e^{-e^{i\lambda} t} f(e^{i\theta}))}$ we have 
\begin{equation}
\label{w-equation}
f\left(\frac{1}{\closure{z}}\right) = e^{-e^{i\lambda} t} f(e^{i\theta})
\end{equation}
and hence \eqref{explicit01} is expressed by
\begin{equation}
\label{explicit02}
\Phi(z) =
\left\{
\begin{array}{ll}
f(z), & z \in \closure{\D}\\[5pt]
\dstyle \frac{(f(e^{i\theta}))^2}{f(1/\bar{z})}, & z \in \C \backslash\closure{\D},
\end{array}
\right.
\end{equation}
where $f(e^{i\theta})$ is uniquely determined by the equation $\arg_{\lambda} f(1/\closure{z}) = \arg_{\lambda} f(e^{i\theta})$ which is deduced by \eqref{w-equation}, where $\arg_{\lambda}$ represents the $\lambda$-argument (for details, see \cite{KimSugawa:pre01}).
The function \eqref{explicit02} is the same as given in \cite{Sugawa:2012a}.

	%	+++++++++++++++++++++++++++++++++++++++++++
	%
	%		5-2
			\subsection{Results}
	%
	%	+++++++++++++++++++++++++++++++++++++++++++

Several conditions under which $f \in \R$ has a quasiconformal extension to the complex plane are known.
One of the remarkable results is due to Chuaqui and Gevirtz \cite{ChuaquiGevirtz:2003} who gave the necessary and sufficient condition under which $f(\D)$ can be a quasidisk by introducing the notion of \textit{property M}.
Comparing to it, our results provide quantitative estimates for the dilatations of quasiconformal extensions.

A Loewner chain for the class $\R$ is simply given by
\begin{equation*}
\label{loewnerforR01}
f_t(z) := f(z) +  t z.
\end{equation*}
In fact, a straightforward calculation shows that 
\begin{equation*}
\label{1/p}
\frac{1}{p(z,t)} = \frac{\round_t f_t(z)}{z\round_z f_t(z)} = f'(z) + t.
\end{equation*}
If we assume that $|\arg f'(z)| \leq \gamma\pi/2$ for a fixed constant $\gamma \in (0,\pi/2]$, then it follows from Corollary \ref{Betkercor} that $f$ has a $\sin (\gamma \pi/2)$-quasiconformal extension to $\C$.
Consequently we will obtain the following.

\begin{thm}
\label{qcextforR}
Let $f \in \A$ and $\gamma \in [0,1)$. 
If $|\arg f'(z)| \leq \gamma \pi /2$ for all $z \in \D$, then $f$ belongs to $\R$ and has a $\sin (\gamma \pi/2)$-quasiconformal extension to $\C$.
\end{thm}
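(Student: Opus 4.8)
The plan is to realize $f$ as the initial element $f_0$ of the Loewner chain $f_t(z) := f(z) + tz$ and to invoke Betker's Corollary \ref{Betkercor}, as the preceding discussion anticipates; the work lies in checking the hypotheses I would otherwise be inclined to skip. First I would settle the membership $f \in \R$: since $\gamma < 1$ we have $\gamma\pi/2 < \pi/2$, so $|\arg f'(z)| \le \gamma\pi/2$ confines $f'(z)$ to a closed sector strictly inside the right half-plane, whence $\Re f'(z) \ge |f'(z)|\cos(\gamma\pi/2) > 0$ on $\D$ (that $f'(z) \ne 0$ is implicit, as $\arg f'(z)$ is assumed defined). The degenerate value $\gamma = 0$ forces $\arg f' \equiv 0$, so $f'$ is a positive-real-valued analytic function, hence constant; with $f'(0) = 1$ this gives $f(z) = z$, which extends as the identity, a $0$-quasiconformal map matching $\sin 0 = 0$. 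So I may assume $\gamma \in (0,1)$.

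Next I would verify that $f_t(z) = f(z) + tz$ really is a Loewner chain. Its leading coefficient $a_1(t) = 1 + t$ is real, strictly increasing, and tends to $\infty$, giving the second defining property. For univalence of each $f_t$, note $\Re f_t'(z) = \Re f'(z) + t > 0$ for $t \ge 0$, so the Noshiro-Warschawski criterion (Theorem \ref{NWthm}) applies. Differentiating in $t$ recovers the equation \eqref{LKPDE} with Herglotz function
\[
p(z,t) = \frac{1}{f'(z)+t},
\]
whose real part is positive because $f'(z)+t$ lies in the right half-plane and reciprocation preserves that half-plane. The one property I would not take on faith is the nesting $f_s(\D) \subset f_t(\D)$; rather than inverting $w = f(z)+tz$ directly, I would appeal to Pommerenke's characterization, by which a locally absolutely continuous family with $|a_1(t)| \to \infty$ solving \eqref{LKPDE} for a Herglotz $p$ is automatically a subordination chain. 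The needed normal-family hypothesis is immediate, since $f_t(z)/a_1(t) = (f(z)+tz)/(1+t)$ is locally uniformly bounded.

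Finally comes the estimate that activates Corollary \ref{Betkercor}. Writing $f'(z) = x + iy$ with $x > 0$, for $t \ge 0$ we have $\arg p(z,t) = -\arg(f'(z)+t)$, and since $x + t \ge x$ gives $|y/(x+t)| \le |y/x|$, monotonicity of $\arctan$ yields $|\arg(f'(z)+t)| \le |\arg f'(z)| \le \gamma\pi/2$; geometrically, adding a nonnegative real number only pushes a right-half-plane point toward the positive real axis. Hence $|\arg p(z,t)| \le \gamma\pi/2$ for all $z \in \D$ and $t \ge 0$, and Corollary \ref{Betkercor} delivers a $\sin(\gamma\pi/2)$-quasiconformal extension of $f_0 = f$ to $\C$. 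I expect the genuine obstacle to be precisely the rigorous justification of the subordination-chain property; the remaining ingredients are a one-line sector computation and the elementary $\arctan$ monotonicity bound.
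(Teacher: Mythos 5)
Your proposal is correct and follows essentially the same route as the paper: the chain $f_t(z) = f(z) + tz$, the computation $1/p(z,t) = f'(z) + t$, the observation that adding $t \geq 0$ only decreases $|\arg(f'(z)+t)|$, and the application of Betker's Corollary \ref{Betkercor}. The only difference is that you carefully verify what the paper asserts without comment --- membership in $\R$, the degenerate case $\gamma = 0$, and the Loewner-chain properties via Pommerenke's characterization --- all of which are sound.
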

As we have seen above, in this case it does not seem to obtain an explicit quasiconformal extension by \eqref{betker} because there is no feasible means to find a Loewner chain $h_t$ whose Herglotz function is given by \eqref{q(z,t)} with $q(z,t) = f'(z) + t$ and its inverse function $h_t^{-1}(z)$ to define $g_t$ by \eqref{g_t(z)}.

%We finish this section to show the following results. 
\begin{thm}
Let $f \in \mathcal{R}$.
Let $\beta_0 \approx 0.580356$ and $\alpha_0 = 1/\beta_0 \approx 1.723078$ be constants which are given in Subsection 3.2 and $\alpha \in (-\alpha_0, \alpha_0)$ be fixed.
Then $J_{\alpha}[f]$ has a $\sin(|\alpha| \beta_0\pi/2)$-quasiconformal extension to $\C$. 
\end{thm}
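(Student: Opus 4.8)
The plan is to read off this theorem as a direct corollary of Theorem \ref{qcextforR}, whose hypothesis is precisely a bound of the form $|\arg F'(z)| \le \gamma\pi/2$ for the function $F$ one wishes to extend. Hence the entire task reduces to controlling $\arg J_\alpha[f]'$ on $\D$ and to checking that the resulting sector half-angle stays strictly below $\pi/2$. First I would record that $J_\alpha[f] \in \A$ and that $J_\alpha[f]'(z) = (f(z)/z)^{\alpha} = (J[f]'(z))^{\alpha}$ with the branch normalized by $J_\alpha[f]'(0) = 1$; this is legitimate because $f \in \R \subset \S$ forces $f(z)/z$ to be nonvanishing on $\D$ with value $1$ at the origin, so $\log J[f]'$ is single-valued.

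The central input is the sector estimate already established in Subsection 3.2. From \eqref{subordinate} one has the subordination $J[f]'(z) \prec q(z)$ with $q(z) = (-z - 2\log(1-z))/z$, and the analysis carried out in the proof of Theorem \ref{44} pins down the optimal half-angle: $q(\D) \subset \Delta_{\beta_0} = \{w : |\arg w| < \beta_0\pi/2\}$ with $\beta_0 \approx 0.580356$. Since subordination factors as $J[f]'(z) = q(\omega(z))$ for a Schwarz function $\omega$, it follows that $J[f]'(z) \in q(\D) \subset \Delta_{\beta_0}$, so $|\arg J[f]'(z)| < \beta_0\pi/2$ for every $z \in \D$ (in fact strictly, since $\arg J[f]' = \Im \log J[f]'$ is harmonic and nonconstant).

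Next, because $\alpha$ is real and the branch is normalized at the origin, $\arg J_\alpha[f]'(z) = \Im(\alpha \log J[f]'(z)) = \alpha\,\arg J[f]'(z)$, whence
\begin{equation*}
|\arg J_\alpha[f]'(z)| = |\alpha|\,|\arg J[f]'(z)| < |\alpha|\,\frac{\beta_0\pi}{2}
\end{equation*}
for all $z \in \D$. Setting $\gamma := |\alpha|\beta_0$, the assumption $\alpha \in (-\alpha_0, \alpha_0)$ with $\alpha_0 = 1/\beta_0$ is exactly the condition that forces $\gamma < 1$. Therefore $|\arg J_\alpha[f]'(z)| \le \gamma\pi/2$ with $\gamma \in [0,1)$, and Theorem \ref{qcextforR} applies to $J_\alpha[f]$, yielding that $J_\alpha[f] \in \R$ and that it admits a $\sin(\gamma\pi/2) = \sin(|\alpha|\beta_0\pi/2)$-quasiconformal extension to $\C$.

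I expect no substantive obstacle here, precisely because the two genuine difficulties have been handled earlier: the optimal sector half-angle $\beta_0$ was computed in Subsection 3.2, and the Loewner--Betker machinery that manufactures a quasiconformal extension out of an argument bound is packaged into Theorem \ref{qcextforR}. The only points that still require care are bookkeeping ones, namely verifying that the chosen branch makes the identity $\arg J_\alpha[f]' = \alpha\,\arg J[f]'$ hold globally on $\D$ (this uses that $J[f]'(\D)$ lies in a half-plane), and confirming that $|\alpha| < \alpha_0$ is the sharp threshold guaranteeing $\gamma < 1$, so that the stated dilatation $\sin(|\alpha|\beta_0\pi/2)$ is genuinely less than $1$.
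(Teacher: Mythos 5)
Your proposal is correct and follows essentially the same route as the paper: the authors likewise observe that $J_{\alpha}[f]'(z) = (f(z)/z)^{\alpha}$, invoke the sector bound $f(z)/z \in \Delta_{\beta_0}$ established in Subsection 3.2 to get $|\arg J_{\alpha}[f]'(z)| \leq |\alpha|\beta_0\pi/2$, and then apply Theorem \ref{qcextforR} with $\gamma = |\alpha|\beta_0 < 1$. Your additional remarks on branch normalization and the strictness of $\gamma < 1$ are sound bookkeeping that the paper leaves implicit.
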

\begin{proof}
We have shown in Subsection 3.2 that if $f \in \R$ then $\{f(z)/z : z \in \D\}$ lies in the sector domain $\Delta_{\beta_0} =\{w : |\arg w| < \pi\beta_0 /2\}$.
It implies that $(f(z)/z)^{\alpha} =  J_{\alpha}[f]'(z) \in \Delta_{\alpha \beta_0}$ for all $z \in \D$, and therefore
\begin{equation*}
\left|\arg J_{\alpha}[f]'(z)\right| \leq \frac{|\alpha|\beta_0\pi}{2}.
\end{equation*}
Hence Theorem \ref{qcextforR} yields our assertion.
\end{proof}

\begin{thm}
Let $f \in \R$.
Then for a fixed $\alpha \in (-1,1)$, $I_{\alpha}[f]$ has a $\sin(|\alpha|\pi/2)$-quasiconformal extension to $\C$.
On the other hand, if $\alpha$ lies on $(-\infty, -1] \cup [1, \infty)$, then there exists a function $g \in \R$ such that $I_{\alpha}[g]$ does not have any quasiconformal extension.
\end{thm}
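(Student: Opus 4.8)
The plan is to split the statement into its positive part ($\alpha\in(-1,1)$) and its negative part ($\alpha\in(-\infty,-1]\cup[1,\infty)$), and within the negative part to treat the boundary values $\alpha=\pm1$ separately from the case $|\alpha|>1$.

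For the positive direction I would argue exactly as in the preceding $J_\alpha$-theorem, through Theorem \ref{qcextforR}. Since $f\in\R$ means $\Re f'(z)>0$ on $\D$, the image $f'(\D)$ lies in the open right half-plane, so $\log f'$ is single-valued with $|\arg f'(z)|<\pi/2$. Because $I_\alpha[f]'(z)=(f'(z))^\alpha=\exp(\alpha\log f'(z))$ and $\alpha$ is real, one has $\arg I_\alpha[f]'(z)=\alpha\arg f'(z)$, whence $|\arg I_\alpha[f]'(z)|<|\alpha|\pi/2$ for every $z\in\D$. Taking $\gamma:=|\alpha|\in[0,1)$, the hypothesis $|\arg I_\alpha[f]'(z)|\le\gamma\pi/2$ of Theorem \ref{qcextforR} is met, and that theorem delivers a $\sin(|\alpha|\pi/2)$-quasiconformal extension of $I_\alpha[f]$ to $\C$.

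For the negative direction when $|\alpha|>1$ there is nothing new to do: Theorem \ref{remforI02} already produces $g=\phi$, with $\phi(z)=-z-2\log(1-z)\in\R$, for which $I_\alpha[\phi]\notin\S$. A map that fails to be univalent on $\D$ cannot be the restriction of any homeomorphism of $\C$, so $I_\alpha[\phi]$ admits no quasiconformal extension at all.

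The genuinely new case is $\alpha=\pm1$, which I expect to be the main obstacle. Here $I_{\pm1}[\phi]$ is univalent — indeed $I_1[\phi]=\phi$, and a short computation gives $I_{-1}[\phi](z)=-z+2\log(1+z)=-\phi(-z)$ — so I cannot invoke non-univalence and must instead show that the image domain is not a quasidisk. Using the identity $1-e^{i\theta}=-2i\sin(\theta/2)e^{i\theta/2}$ already recorded in the paper, I would write out $\phi(e^{i\theta})$ and read off that its real part tends to $+\infty$ while its imaginary part tends to $+\pi$ as $\theta\to0^+$ and to $-\pi$ as $\theta\to2\pi^-$. Thus the two boundary branches emanating from the single prime end at $z=1$ are asymptotic to the parallel lines $\Im w=\pm\pi$; under the inversion $w\mapsto 1/w$ they become arcs through the origin, each tangent there to the real axis (comparable to parabolas $\Im\zeta=\pm\pi(\Re\zeta)^2$) and approaching from opposite sides, so the boundary of $\phi(\D)$ carries a cusp at $\infty$ on the Riemann sphere. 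Since a quasicircle satisfies Ahlfors' arc (bounded-turning) condition and hence cannot carry a cusp, $\phi(\D)$ is not a quasidisk and $\phi=I_1[\phi]$ has no quasiconformal extension. Finally, because $I_{-1}[\phi](z)=-\phi(-z)$ differs from $\phi$ only by the affine change $w\mapsto -w$ and the conformal automorphism $z\mapsto -z$ of $\D$, its image is a rotation of $\phi(\D)$ and inherits the same cusp; hence $g=\phi$ works for $\alpha=-1$ as well, which completes the proof. The one step requiring care is the boundary computation and the verification that the tangential approach of the two branches genuinely produces a non-quasidisk cusp rather than, say, a removable corner.
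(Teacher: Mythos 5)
Your proposal is correct and takes essentially the same route as the paper: Theorem \ref{qcextforR} applied to $|\arg I_{\alpha}[f]'(z)| < |\alpha|\pi/2$ for $|\alpha|<1$, Theorem \ref{remforI02} (non-univalence precludes any homeomorphic extension) for $|\alpha|>1$, and the non-quasidisk images of $\phi(z)=-z-2\log(1-z)$ and $\psi(z)=-z+2\log(1+z)$ at $\alpha=\pm1$. The only differences are cosmetic: the paper picks $g=\psi$ as the counterexample for $\alpha=-1$ (so that $I_{-1}[\psi]=\phi$) where you pick $g=\phi$ with $I_{-1}[\phi]=\psi=-\phi(-z)$, which is equivalent by symmetry, and the paper merely asserts that $\phi(\D)$ is not a quasidisk whereas you supply the (correct) verification that the two boundary branches are asymptotic to $\Im w=\pm\pi$, producing a cusp at $\infty$ that violates Ahlfors' bounded-turning condition.
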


\begin{proof}
Let $f \in \R$.
Since $|\arg I_{\alpha}[f]'| < |\alpha|\pi/2$, applying Theorem \ref{qcextforR} we conclude that $f$ has a $\sin(|\alpha|\pi/2)$-quasiconformal extension to $\C$.

As for the second statement of the theorem, by Theorem \ref{remforI02} it suffices to consider the case when $\alpha = 1$ or $\alpha = -1$. 
If $\alpha =1$, then our statement easily follows because $I_{1}[\phi] = \phi(z) = -z-2\log(1-z) \in \R$ map $\D$ onto a domain which is not a quasicircle.
One can similarly show it in the case when $\alpha=-1$ with the counterexample $\psi(z) = -z+2\log(1+z)$.
\end{proof}

\section*{Acknowledgement}
The authors would like to thank the referee for reading a manuscript of the paper carefully and pointing out some errors which improved this paper.

%ReferencesReferencesReferencesReferencesReferencesReferencesReferences
%ReferencesReferencesReferencesReferencesReferencesReferencesReferences
%ReferencesReferencesReferencesReferencesReferencesReferencesReferences
%ReferencesReferencesReferencesReferencesReferencesReferencesReferences

\bibliographystyle{amsalpha}
\def\cprime{$'$}
\providecommand{\bysame}{\leavevmode\hbox to3em{\hrulefill}\thinspace}
\providecommand{\MR}{\relax\ifhmode\unskip\space\fi MR }
% \MRhref is called by the amsart/book/proc definition of \MR.
\providecommand{\MRhref}[2]{%
  \href{http://www.ams.org/mathscinet-getitem?mr=#1}{#2}
}
\providecommand{\href}[2]{#2}

%ReferencesReferencesReferencesReferencesReferencesReferencesReferences
%ReferencesReferencesReferencesReferencesReferencesReferencesReferences
%ReferencesReferencesReferencesReferencesReferencesReferencesReferences
%ReferencesReferencesReferencesReferencesReferencesReferencesReferences

\end{document}